\newtheorem{myproposition}{Proposition}[section]
\newtheorem{mytheorem}[myproposition]{Theorem}
\newtheorem{mylemma}[myproposition]{Lemma}
\newtheorem{mycorollary}[myproposition]{Corollary}
\newtheorem{myfact}[myproposition]{Fact}
\newtheorem{myobservation}[myproposition]{Observation}
\def\imod#1{\allowbreak\mkern10mu({\operator@font mod}\,\,#1)}
\begin{document}

\title{Deriving Priorities From Inconsistent PCM using the Network Algorithms}

\author[1]{Marcin Anholcer}
\author[2]{Janos F\"ul\"op}
\affil[1]{\scriptsize{}Pozna\'n University of Economics, Faculty of Informatics and Electronic Economy}
\affil[ ]{Al. Niepodleg{\l}o\'sci 10, 61-875 Pozna\'n, Poland, \textit{m.anholcer@ue.poznan.pl}}
\affil[ ]{}
\affil[2]{Research Group of Operations Research and Decision Systems}
\affil[ ]{Computer and Automation Research Institute, Hungarian Academy of Sciences}
\affil[ ]{P.O. Box 63, 1518 Budapest, Hungary, \textit{fulop@sztaki.hu}}

\date{}

\maketitle

\begin{abstract}
In several multiobjective decision problems Pairwise Comparison Matrices (PCM) are applied to evaluate the decision variants. The problem that arises very often is the inconsistency of a given PCM. In such a situation it is important to approximate the PCM with a consistent one. The most common way is to minimize the Euclidean distance between the matrices. In the paper we consider the problem of minimizing the maximum distance. After applying the logarithmic transformation we are able to formulate the obtained subproblem as a Shortest Path Problem and solve it more efficiently. We analyze and completely characterize the form of the set of optimal solutions and provide an algorithm that results in a unique, Pareto-efficient solution.
\\\textbf{Keywords:} Pairwise Comparisons, Shortest Path Problem, Network Simplex Method, Pareto efficiency.
\end{abstract}

\section{Introduction}

One of the commonly used tools of multiobjective decision making is the Analytic Hierarchy Process, introduced by Saaty (see e.g. \cite{ref_saa1980,ref_erk1991}) and studied by numerous authors. During the process, the Decision Maker compares pairwise the given decision variants. His preferences are defined using a so-called pairwise comparison matrix $A = [a_{ij}]$, where the positive number $a_{ij}$ says how many times the variant $i$ is better than the variant $j$. In order to make the preferences consistent, the values of $a_{ij}$, $i = 1, 2,\dots, n$, $j = 1, 2, \dots, n$ should satisfy the following conditions:

\begin{eqnarray}\label{cond1}
a_{ji}=\frac{1}{a_{ij}} &\text{for}& i,j = 1, 2, \dots, n,\\
a_{ij}a_{jk}  = a_{ik}  &\text{for}& i,j,k=1,2,\dots,n.\label{cond2}
\end{eqnarray}

The matrix $A$ defining consistent preferences is also called consistent. The condition (\ref{cond1}) is usually satisfied, since one may ask the Decision Maker to fill only the part of matrix over the diagonal and then calculate the remaining elements as the reciprocals of the introduced numbers. The condition (\ref{cond2}) is unfortunately more difficult to fulfill and is the usually the source of the inconsistency.

It can be proved (see e.g. \cite{ref_saa1980}) that $A$ is consistent if and only if there exists a vector $v=(v_1, v_2, \dots, v_n)$ with positive entries such that

\begin{eqnarray}\label{formula_w_A}
a_{ij}=\frac{v_i}{v_j} &\text{for} &  i,j=1,2,\dots, n.
\end{eqnarray}

The elements of $v$ are interpreted as the weights expressing the priorities of the decision variants. Finding their values is essential for analyzing the preferences of the Decision Maker. Of course if some vector $v$ satisfies (\ref{formula_w_A}), then so does the vector $\lambda v$ for every $\lambda>0$.

\section{Problem formulation}

In the real life problems the matrix $A$ is usually inconsistent, so it is impossible to find any vector $v$ satisfying (\ref{formula_w_A}), as it does not exist. The goal is to find the vector $v$ that defines the matrix $B$ of the entries $v_i/v_j$ which is the closest one to $A$ in some sense. In addition to this approximation problem, we are also interested in finding a (Pareto) efficient solution of the following vector optimization problem (see e.g. \cite{ref_BlaCarCon}):

\begin{equation}
\min_{v\in R^n_{++}}\left(\left|a_{ij}-\frac{v_i}{v_j} \right|\right).\label{vector_problem}
\end{equation}

There are many methods for solving the problem of approximating the matrix $A$ by a consistent matrix $B$ generated by a suitable vector $v$ of weights. Saaty proposed the principal eigenvector of $A$ for using as $v$. He also introduced an inconsistency index based on the maximal eigenvalue. See \cite{ref_saa1980} for the details.

Another approach is to minimize the average error of the approximation. One of the most popular measures is the square mean calculated according to the formula

\begin{eqnarray}\label{formula_G2}
G_2(A,v)=\left[\frac{1}{n^2}\sum_{i=1}^n\sum_{j=1}^n\left(a_{ij}-\frac{v_i}{v_j} \right)^2 \right]^{1/2},
\end{eqnarray}
and its minimization is equivalent to the Least Squares Method (LSM) \cite{ref_boz2008, ref_ful2008}.

Beside the one proposed in \cite{ref_saa1980}, further methods of measuring the inconsistency were introduced and analyzed, e.g., in \cite{ref_chu1979, ref_koc1993, ref_anh2010, ref_boz2008, ref_ful2010, ref_boz2008a, ref_mog2009, ref_bru2014}. For a positive matrix $A$, a statistical and axiomatic approach leading to the  geometric means solution were used in \cite{ref_hov2008}. Some simulation experiments comparing different inconsistency measures have been performed by \cite{ref_mog2009}. In the latter paper the authors also considered the case of the general mean which can be defined, for $p>0$, as

\begin{eqnarray}\label{formula_Gp}
G_p(A,v)=\left[\frac{1}{n^2}\sum_{i=1}^n\sum_{j=1}^n\left| a_{ij}-\frac{v_i}{v_j} \right| ^p \right]^{1/p}.
\end{eqnarray}

Beside the case $p=2$ mentioned above, the cases $p=1$ and $p\to\infty$ leading to the arithmetic mean and the maximum norm, respectively, seem to be the most important, but further cases are also considered in \cite{ref_mog2009}.

A survey of the methods of deriving the priorities can be found in \cite{ref_ChoWed}. The authors compare the performance of almost 20 different methods. Also, a shorter survey can be found in \cite{ref_Lin}. Here the author studies in particular the preference weighted least square (PWLS) method, which leads to a convex programming optimization problem (more general version of this method is also studied in \cite{ref_DopGon}). The convexity of the optimization problems is not always guaranteed. For example, LSM may lead to nonconvex problems, which makes the solving process more elaborate and time-consuming. Standard global optimization methods were proposed in \cite{ref_CarMes} and \cite{ref_ful2008} for solving these difficult, multi-modal problems. See also \cite{ref_boz2008} for another approach.

In \cite{ref_BlaCarCon} a framework was presented  for testing the ability of the methods to provide a Pareto-optimal solution for (\ref{vector_problem}). In particular it is shown that the principal eigenvector method is not always effective, namely,  in case of some PC matrices it results in a solution which is not Pareto-optimal. See also
\cite{ref_boz2014} for further achievements on this topic. In this paper we study a method that always results in a Pareto-optimal solution.

Our method concerns  the $G_\infty$ variant of the generalized mean, where, for the sake of simplicity, we omit the constant $1/n^2$ from the objective function. In \cite{ref_ChoWed} this measure and the corresponding method is called as \textit{least worst absolute error} (LWAE). To be more specific, we want to solve the following problem:

\begin{eqnarray}\label{problem11}
\min{ G_\infty (A,v)=\max_{1\leq i,j\leq n}{\left| a_{ij}-\frac{v_i}{v_j} \right|} },\\
v_1=1,\\						
v_j>0,& j=1,2,\dots,n.\label{problem12}
\end{eqnarray}

The constraint $v_1=1$ has been introduced in order to normalize the vector $v$. If some $v$ is the solution to the above problem, then so does $\lambda v$ is for every $\lambda>0$, yielding the same objective function value. Other normalizing constraints have been used in other papers, see e.g. \cite{ref_anh2010,ref_boz2008,ref_ful2008}.

In the form of (\ref{problem11})-(\ref{problem12}), the problem  seems to be a difficult optimization problem since the objective function is not convex, thus no local search algorithm may be applied in order to find the global optimum. However, some transformations allow us to solve this problem efficiently. In Section \ref{section_alg} we provide a new solution method, using the logarithmic transformation on the problem and a network algorithm, as well as a root finding algorithm. In Section \ref{section_unique} we characterize the set of optimal solutions and give sufficient and necessary conditions for the optimum to be unique. Moreover, we provide and justify a method for comparing the optimal solutions, and provide an algorithm that results in a unique solution. Although not every solution of (\ref{problem11})-(\ref{problem12}) must be efficient, we prove that in the set of optimal solutions there is always a Pareto-optimal one, which is found by our method.

We finish the paper with computational experiments and conclusions.

\section{New algorithm for the LWAE problem}\label{section_alg}

A solution method to the problem (\ref{problem11})-(\ref{problem12}) was proposed in \cite{ref_anh2012}. The problem has been reformulated as follows, using an additional variable $z$ replacing $G_\infty(A,v)$:
\begin{eqnarray}\label{problem21}
\min{z}\\
\left| a_{ij}-\frac{v_i}{v_j}\right|\leq z,& i,j=1,2,\dots,n\\
v_1=1, \\
v_j>0,& j=1,2,\dots,n.\label{problem22}
\end{eqnarray}

Then the parametrization of $z$ has been performed. The main algorithm has the form of a bisection method on the variable $z$, while the subproblem for given $z$, solved at every step, is the following linear programming problem. In \cite{ref_anh2012} the modified simplex method was used to solve it. The problem under consideration was used to find any feasible solution of the linear system
\begin{eqnarray}
-v_i+(a_{ij}-z) v_j\leq 0,\ 1\leq i<j\leq n,\label{new_system1}\\
-v_i+\frac{v_j}{a_{ji}+z}\leq 0,\  1\leq i<j\leq n,\label{new_system2}\\
\frac{v_i}{a_{ij}+z}-v_j\leq 0,\  1\leq i<j\leq n,\label{new_system3}\\
(a_{ji}-z) v_i-v_j\leq 0,\  1\leq i<j\leq n,\label{new_system4}\\
v_j> 0,\  j=1,\dots, n,\label{new_system5}\\
v_1=1,\label{new_system6}
\end{eqnarray}

\noindent{}or to determine that it has no solution. However, using the similar transformation as in \cite{ref_ful2008}, we are able to find an instance of the Shortest Path Problem that may be used for the same purpose. The problem (\ref{new_system1})-(\ref{new_system6}) can be then rewritten as follows:
\begin{eqnarray}
\frac{v_i}{v_j}\geq(a_{ij}-z),\ 1\leq i<j\leq n,\label{new2_system1}\\
\frac{v_i}{v_j}\geq\frac{1}{a_{ji}+z},\ 1\leq i<j\leq n,\label{new2_system2}\\
\frac{v_j}{v_i}\geq\frac{1}{a_{ij}+z},\ 1\leq i<j\leq n,\label{new2_system3}\\
\frac{v_j}{v_i}\geq(a_{ji}-z),\ 1\leq i<j\leq n,\label{new2_system4}\\
v_j> 0,\ j=1,\dots, n,\label{new2_system5}\\
v_1=1.\label{new2_system6}
\end{eqnarray}

We use the following substitution:
\begin{eqnarray}
w_j=\ln{v_j},\ j=1,\dots ,n.
\end{eqnarray}

In this way we can reformulate (\ref{new2_system1}-(\ref{new2_system6}) as follows:

\begin{eqnarray}
w_j-w_i\leq-\ln\max\{a_{ij}-z,\frac{1}{a_{ji}+z}\}=l_{ij},\ 1\leq i<j\leq n,\label{log_system1}\\
w_i-w_j\leq-\ln\max\{a_{ji}-z,\frac{1}{a_{ij}+z}\}=l_{ji},\ 1\leq i<j\leq n,\label{log_system2}\\
w_1=0.\label{log_system3}
\end{eqnarray}

We consider the version of the Shortest Path Problem, where the lengths of the arcs can be negative. Assume that the underlying network $N=N(z)$ is a complete digraph of the nodes $\{1,\dots,n\}$, the length of the arc $(i,j)$ is equal to $l_{ij}$ and we are looking for the shortest paths from the node $1$ to all the remaining nodes.

\begin{myproposition}\label{neg_cycle}
Two following statements are true.
\begin{enumerate}
\item
If there is no negative cycle in the network $N$, then let $d_j$ denote the shortest distance from node $1$ to node $j$. The vector of the components
\begin{eqnarray}
w_j=d_j,\ j=1,\dots ,n,
\end{eqnarray}
is a solution to the system (\ref{log_system1})-(\ref{log_system3}).
\item
If there exists a negative cycle in $N$, then the system (\ref{log_system1})-(\ref{log_system3}) is inconsistent.
\end{enumerate}
\end{myproposition}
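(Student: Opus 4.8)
The plan is to recognize this statement as the classical duality between systems of difference constraints and shortest-path distances, and to prove the two parts by the triangle inequality and by summing around a cycle, respectively. The first thing I would do is record the key observation that, because $N$ is a \emph{complete} digraph, the two families of constraints (\ref{log_system1})--(\ref{log_system2}) together amount to the single requirement
\[
w_b - w_a \leq l_{ab} \quad \text{for every ordered pair } a\neq b,
\]
since the case $a<b$ is exactly (\ref{log_system1}) and the case $a>b$ is (\ref{log_system2}) after renaming $i=b$, $j=a$. Thus each constraint corresponds to an arc of $N$ carrying its own length, and the whole system encodes one inequality per arc.

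For the first part, I would invoke the defining optimality property of shortest distances. When $N$ has no negative cycle, the distances $d_j$ are well defined and finite and satisfy the triangle (Bellman) inequality
\[
d_b \leq d_a + l_{ab} \quad \text{for every arc } (a,b),
\]
because concatenating a shortest path to $a$ with the arc $(a,b)$ is one admissible walk to $b$, whose length cannot beat the optimum $d_b$. Rearranging gives $d_b - d_a \leq l_{ab}$, which is precisely the reformulated constraint above with $w_j = d_j$. Finally $d_1 = 0$, since node $1$ is the source and, in the absence of a negative cycle, no non-trivial walk can lower its own distance below the empty walk of length $0$; this yields (\ref{log_system3}). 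Hence $w_j=d_j$ satisfies (\ref{log_system1})--(\ref{log_system3}).

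For the second part, I would argue by contradiction using a telescoping sum. Suppose a feasible $w$ exists and let $i_1\to i_2\to\cdots\to i_k\to i_1$ be a negative cycle, so that $\sum_{m=1}^{k} l_{i_m i_{m+1}}<0$ (indices taken cyclically). Applying the per-arc inequality $w_{i_{m+1}}-w_{i_m}\leq l_{i_m i_{m+1}}$ to each arc of the cycle and adding, the left-hand side telescopes to $0$, giving
\[
0 \;=\; \sum_{m=1}^{k}\bigl(w_{i_{m+1}}-w_{i_m}\bigr) \;\leq\; \sum_{m=1}^{k} l_{i_m i_{m+1}} \;<\; 0,
\]
a contradiction. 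Therefore no feasible $w$ can exist and the system (\ref{log_system1})--(\ref{log_system3}) is inconsistent.

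This argument is essentially textbook, so there is no deep obstacle; the only place demanding care is the bookkeeping noted in the first step, namely checking that the $i<j$ indexing of (\ref{log_system1})--(\ref{log_system2}) really does produce one inequality per directed arc in \emph{both} orientations, so that the triangle inequality and the cyclic sum may legitimately be applied to arcs traversed in either direction. I would also make explicit that ``no negative cycle'' is exactly the hypothesis guaranteeing the $d_j$ are finite (so Part 1 is not vacuous) and that $d_1=0$, which is why the normalization (\ref{log_system3}) comes for free.
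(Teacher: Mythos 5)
Your proof is correct and follows essentially the same route as the paper's: part 1 via the triangle (Bellman) inequality for shortest distances together with $d_1=0$, and part 2 by summing the per-arc constraints around a negative cycle to get the contradiction $0\leq L<0$. The only difference is cosmetic --- your explicit bookkeeping that (\ref{log_system1})--(\ref{log_system2}) give one inequality per directed arc is a nice clarification, but the underlying argument is the same.
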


\begin{proof}
Assume there is no negative cycle in $N$. Let $dist(i,j)$ denote the shortest distance between the nodes $i$ and $j$. From the triangle inequality for the nodes $1$, $i$ and $j$ we have
$$
dist(1,i)+dist(i,j)\geq dist(1,j).
$$
But $dist(1,i)=d_i$ for every $i$ and $dist(i,j)\leq l_{ij}$ for every $i,j$, so
$$
d_i+l_{ij}\geq d_j
$$
and analogously
$$
d_j+l_{ji}\geq d_i.
$$
Substituing $i$ with $1$ in the triangle inequality we obtain
$$
d_1\geq dist(1,j)-dist(1,j)=0,
$$
and on the other hand
$$
d_1\leq l_{11}=0,
$$
so all the constraints (\ref{log_system1})-(\ref{log_system3}) are satisfied.

Assume now there is a cycle $(i_1,i_2,\dots,i_s,i_1)$ in $N$ with length $L<0$. Consider the system of inequalities, being the subsystem of (\ref{log_system1})-(\ref{log_system3}):
$$
\begin{array}{l}
w_{i_2}-w_{i_1}\leq l_{i_1,i_2},\\
w_{i_3}-w_{i_2}\leq l_{i_2,i_3},\\
\quad\quad\quad\vdots\\
w_{i_s}-w_{i_{s-1}}\leq l_{i_{s-1},i_s},\\
w_{i_1}-w_{i_s}\leq l_{i_s,i_1}.
\end{array}
$$
Adding the inequalities we obtain
$$
0\leq L,
$$
a contradiction. Thus in such a case the system (\ref{log_system1})-(\ref{log_system3}) is inconsistent.
\end{proof}

A survey of the methods of finding negative cycles in networks can be found in \cite{ref_CheGol}. The instance of Shortest Path Problem discussed above will be solved with the special version of Network Simplex Algorithm, described in \cite{ref_AhuMagOrl}, pp.\   425-430. The algorithm handles negative arc lengths, returns the distances from source to all the nodes based on the node potentials and easily identifies a negative cycle if such a cycle exists. Thus this method allows to identify which part of the Proposition \ref{neg_cycle} is to be used.

Let us consider the length of the arc $(i,j)$ in (\ref{log_system1}) as a function of $z$:
$$l_{ij}(z)=-\ln\max\{a_{ij}-z,\frac{1}{a_{ji}+z}\}.$$

The following observation will be useful in our further considerations.

\begin{myproposition}\label{lij_properties}
For every $i$ and $j$ the function $l_{ij}(z)$ has the following properties:
\begin{enumerate}
\item
$l_{ij}(z)$ is a strictly increasing continuous function of $z$ over $[0,\infty)$.
\item
If $a_{ij}\le 1$, then
$$l_{ij}(z)=\ln (a_{ji}+z)$$
is a strictly concave function over $[0,\infty)$.
\item
If $a_{ij}> 1$, then at $z=a_{ij}-a_{ji}$,  $l_{ij}$ has a single inflexion point over $[0,\infty)$,
$$l_{ij}(z)=-\ln (a_{ij}-z)$$
is strictly convex over $[0,a_{ij}-a_{ji}]$, and
$$l_{ij}(z)=\ln (a_{ji}+z)$$
is strictly concave over $[a_{ij}-a_{ji},\infty)$.
\end{enumerate}
\end{myproposition}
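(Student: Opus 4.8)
The plan is to reduce everything to an analysis of the two competing functions inside the maximum. Writing $a=a_{ij}$ and, using the reciprocity condition (\ref{cond1}), $b=a_{ji}=1/a_{ij}$, set
$$f(z)=a-z,\qquad g(z)=\frac{1}{b+z},$$
so that $l_{ij}(z)=-\ln\max\{f(z),g(z)\}$. First I would record that the argument of the logarithm is strictly positive on $[0,\infty)$: indeed $g(z)>0$ there, so $\max\{f,g\}\ge g>0$, and $l_{ij}$ is well defined and continuous as the composition of $-\ln$ with the (continuous) pointwise maximum of two continuous functions.

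For part 1, I would establish monotonicity abstractly, before any case work. Both $f$ and $g$ are strictly decreasing on $[0,\infty)$, and the pointwise maximum of two strictly decreasing functions is again strictly decreasing; since $-\ln$ is strictly decreasing and its argument is a positive strictly decreasing function, $l_{ij}$ is strictly increasing. This handles part 1 uniformly, without reference to which branch is active.

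The heart of the proof is deciding, for each $z$, which of $f$ and $g$ realizes the maximum. Here the key computation is the difference, which simplifies because $ab=1$:
$$f(z)-g(z)=\frac{(a-z)(b+z)-1}{b+z}=\frac{z\bigl((a-b)-z\bigr)}{b+z}.$$
Since $b+z>0$ on $[0,\infty)$, the sign of $f-g$ is that of $z\bigl((a-b)-z\bigr)$, so $f$ and $g$ coincide exactly at $z=0$ and $z=a-b=a_{ij}-a_{ji}$. If $a_{ij}\le 1$ then $a-b\le 0$, hence $f(z)-g(z)\le 0$ for all $z\ge 0$ (with equality only at $z=0$), so $\max\{f,g\}=g$ throughout and $l_{ij}(z)=-\ln g(z)=\ln(a_{ji}+z)$; the second derivative $-1/(a_{ji}+z)^2<0$ gives strict concavity, which is part 2. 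If $a_{ij}>1$ then $a-b>0$, and $z\bigl((a-b)-z\bigr)$ is positive on $(0,a-b)$ and negative on $(a-b,\infty)$; thus $\max\{f,g\}=f$ on $[0,a-b]$, giving $l_{ij}(z)=-\ln(a_{ij}-z)$ with second derivative $1/(a_{ij}-z)^2>0$ (strict convexity, the interval staying inside the domain because $z\le a-b<a$), and $\max\{f,g\}=g$ on $[a-b,\infty)$, giving $l_{ij}(z)=\ln(a_{ji}+z)$ with strict concavity as before.

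It remains to note that $z=a_{ij}-a_{ji}$ is the unique inflexion point of part 3: $l_{ij}$ is strictly convex immediately to its left and strictly concave immediately to its right, the two one-sided formulas agree there (both equal $-\ln(a_{ij}-(a-b))=\ln(a_{ji}+(a-b))$, consistent since $f=g$ at that point), and no further change of convexity can occur because each branch has a second derivative of constant sign. I expect the only delicate point to be the bookkeeping of the sign of $f-g$ together with the check that the convex branch stays within the domain of $-\ln(a_{ij}-z)$; once the factorization relying on $ab=1$ is in hand, the rest is routine differentiation.
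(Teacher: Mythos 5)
Your proof is correct and follows essentially the same route as the paper's: both reduce the claim to the piecewise representation of $l_{ij}$ with the single breakpoint $z=a_{ij}-a_{ji}$, and then read off monotonicity and convexity/concavity from the first and second derivatives of $-\ln(a_{ij}-z)$ and $\ln(a_{ji}+z)$ on the respective intervals. You do, however, improve on the paper's write-up in two places. First, the paper simply \emph{asserts} the piecewise form, while you justify it via the factorization $f(z)-g(z)=\frac{z\bigl((a_{ij}-a_{ji})-z\bigr)}{a_{ji}+z}$; this makes explicit that the branch structure hinges on the reciprocity condition $a_{ij}a_{ji}=1$, a dependence the paper leaves implicit, and it also supplies the (needed but unstated) check that $a_{ij}-z>0$ on the convex branch. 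Second, your part 1 is branch-free: the pointwise maximum of two strictly decreasing positive functions is strictly decreasing, so composing with $-\ln$ gives a strictly increasing function; the paper instead differentiates on each open piece and inspects one-sided derivative limits at the breakpoint, which is more cumbersome (continuity plus positive derivative on the two open pieces already suffices). Both routes yield the same conclusions; yours is tighter and localizes the use of reciprocity to a single identity.
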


\begin{proof}
Observe that for $z\geq 0$ we have
$$
l_{ij}(z)=\left\{
\begin{array}{lll}
-\ln(a_{ij}-z), & \text{if} & 0\leq z\leq a_{ij}-a_{ji},\\
-\ln(\frac{1}{a_{ji}+z})=\ln(a_{ji}+z), & \text{if} & z> a_{ij}-a_{ji}.
\end{array}
\right.
$$

Obviously $l_{ij}(z)$ is continuous on both intervals. Moreover, we have
$$
\lim_{z\rightarrow (a_{ij}-a_{ji})^-}{l_{ij}(z)}=\lim_{z\rightarrow (a_{ij}-a_{ji})^+}{l_{ij}(z)}=l_{ij}(a_{ij}-a_{ji})=\ln(a_{ij}),
$$
so the function is continuous over $[0,\infty]$. Assuming that $z>0$, the first derivative equals to
$$
l^\prime_{ij}(z)=\left\{
\begin{array}{lll}
\frac{1}{a_{ij}-z}>0, & \text{if} & z< a_{ij}-a_{ji},\\
\frac{1}{a_{ji}+z}>0, & \text{if} & z> a_{ij}-a_{ji}.
\end{array}
\right.
$$
Moreover, we have
$$
\lim_{z\rightarrow 0^+}{l^\prime_{ij}(z)}\in \{a_{ij},a_{ji}\},\\
\lim_{z\rightarrow (a_{ij}-a_{ji})^-}{l^\prime_{ij}(z)}=a_{ij}>0,\\
\lim_{z\rightarrow (a_{ij}-a_{ji})^+}{l^\prime_{ij}(z)}=a_{ji}>0,
$$
so $l_{ij}(z)$ is increasing on whole interval $[0,\infty]$. Finally, the second derivative equals to
$$
l^{\prime\prime}_{ij}(z)=\left\{
\begin{array}{lll}
\frac{1}{(a_{ij}-z)^2}>0, & \text{if} & z< a_{ij}-a_{ji},\\
\frac{-1}{(a_{ji}+z)^2}<0, & \text{if} & z> a_{ij}-a_{ji},
\end{array}
\right.
$$
and thus $l_{ij}(z)$ is convex for $z\le a_{ij}-a_{ji}$ and concave for $z\ge a_{ij}-a_{ji}$.
\end{proof}

For a cycle $C$, let $l_C(z)$ be the sum of the lengths of the edges of $C$. From the Proposition \ref{neg_cycle} it follows that a cycle $C$ in $N$ such that $l_C(z)<0$ exists if and only if the system (\ref{log_system1})-(\ref{log_system3}) is inconsistent. This property can be exploited as follows. Assume that the optimal solution to the problem (\ref{problem21})-(\ref{problem22}) lies in some interval $[z_{\min},z_{\max}]$, where (\ref{log_system1})-(\ref{log_system3}) is inconsistent for $z_{\min}$ and has a feasible solution for $z_{\max}$. Then the following is true.

\begin{myproposition}
Let $[z_{\min},z_{\max}]$ be the actual interval of search, i.e.\ the optimal solution to the problem (\ref{problem21})-(\ref{problem22}) lies in the interval $[z_{\min},z_{\max}]$, where (\ref{log_system1})-(\ref{log_system3}) is inconsistent for $z_{\min}$ and has a feasible solution for $z_{\max}$. Let $C$ be a cycle such that $l_C(z_{\min})<0$. Let $\bar z\in(z_{min},z_{max}]$. If $l_C(\bar z)<0$, then the problem (\ref{log_system1})-(\ref{log_system3}) is inconsistent for every $z\in[z_{min},\bar{z}]$.
\end{myproposition}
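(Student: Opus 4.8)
The plan is to derive the claim from two facts already in hand: the monotonicity of the arc-length functions (part 1 of Proposition \ref{lij_properties}) and the negative-cycle characterization of inconsistency (part 2 of Proposition \ref{neg_cycle}). The quantity to track is the cycle length $l_C(z)$, and the whole argument rests on showing that $l_C$ is monotone in $z$.

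First I would write $C$ as its sequence of arcs $(i_1,i_2),(i_2,i_3),\dots,(i_s,i_1)$, so that
$$
l_C(z)=\sum_{k=1}^{s} l_{i_k,i_{k+1}}(z),
$$
with the indices read cyclically. Since each summand $l_{i_k,i_{k+1}}(z)$ is a strictly increasing continuous function on $[0,\infty)$ by part 1 of Proposition \ref{lij_properties}, the finite sum $l_C(z)$ inherits both properties; in particular $l_C$ is strictly increasing on $[0,\infty)$.

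Then I would fix an arbitrary $z\in[z_{\min},\bar z]$. Because $z\le\bar z$ and $l_C$ is increasing, we get $l_C(z)\le l_C(\bar z)<0$, so $C$ is a negative cycle of the network $N(z)$ for this value of $z$. Applying part 2 of Proposition \ref{neg_cycle} then shows that the system (\ref{log_system1})-(\ref{log_system3}) is inconsistent at $z$. As $z$ ranged over all of $[z_{\min},\bar z]$, inconsistency holds throughout the whole subinterval, which is exactly the assertion.

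I do not expect a genuine obstacle here: the one substantive point is the monotonicity of $l_C$, which is immediate once Proposition \ref{lij_properties} is invoked term by term, and the only thing to verify before applying it is that $[z_{\min},\bar z]\subseteq[0,\infty)$, which holds because the search is restricted to nonnegative values of $z$. It is worth remarking that the hypothesis $l_C(z_{\min})<0$ is in fact redundant for the argument, since it follows from $l_C(\bar z)<0$ together with $z_{\min}<\bar z$ and the monotonicity of $l_C$.
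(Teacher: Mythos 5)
Your proof is correct and follows essentially the same route as the paper's: both deduce that $l_C$ is strictly increasing from Proposition \ref{lij_properties} applied arc by arc, and then invoke the negative-cycle criterion of Proposition \ref{neg_cycle} to conclude inconsistency on the whole subinterval. Your explicit handling of the endpoint $z=\bar z$ via the weak inequality $l_C(z)\le l_C(\bar z)<0$, and your remark that the hypothesis $l_C(z_{\min})<0$ is redundant, are both accurate and slightly more careful than the paper's wording, which literally concludes only for $z<\bar z$.
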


\begin{proof}
As $l_{ij}(z)$ is strictly increasing for every arc $(i,j)$ (Proposition \ref{lij_properties}), then also $l_C(z)$ is. So if $l_C(\bar{z})<0$ for some $\bar{z}$, then also $l_C(z)<0$ for every $z<\bar{z}$. Finally, by the Proposition \ref{neg_cycle}, the system is inconsistent for every $z<\bar{z}$.
\end{proof}

The immediate consequence of the above proposition is that we are able to shorten the search interval in a very simple way. Assume that given a cycle $C$ and a nonnegative real $z$, we have $l_C(z)<0$. Since $l_C$ is a strictly increasing function of $z$, it is easy to find a $\bar z >z$ such that $l_C(\bar z)= 0$. Now we can proceed in two ways.

First idea is to use the bisection method.

The second possibility is to check by solving a Shortest Path Problem whether (\ref{log_system1})-(\ref{log_system3}) is consistent with $z=\bar z$. If it is so, we are done: $z=\bar z$ is an optimal solution. Otherwise, the algorithm for solving SPP returns a cycle $\bar C$ such that $l_{\bar C}(\bar z)<0$, and we repeat the above process with $z\gets \bar z$ and $C\gets \bar C$.

We are going to describe both suggested methods in more formal way (see Algorithms \ref{alg2} and \ref{alg3}). Let us start with finding a value of $\bar z$ for which $l_C(\bar z)= 0$.

In order to do that we will use a modified version of the false position method. It is a known fact that the standard version of this procedure does not perform well, in particular it can have the rate of convergence $2/3$ in the case when the function $l_C(z)$ is strictly convex or strictly concave on the search interval (in such a situation one of the endpoints of the interval does not change). There are, however, its modifications. Probably the first of them was the one called Illinois method, having overlinear convergence rate, described in \cite{ref_DowJar}. Even better is the Anderson-Bj\"orck method \cite{ref_AndBjo}, that we will apply here. A comparison of various methods of this type can be found in \cite{ref_For}.

The method adapted for our purposes is presented as Algorithm \ref{alg02} below. Below we use the notation
$$
h_{ij}=\frac{l_C(z_j)-l_C(z_i)}{z_j-z_i}
$$
and
$$
h_i=l_C(z_i).
$$
\begin{algorithm}
\caption{Anderson-Bj\"orck Method}\label{alg02}
\begin{enumerate}
\item[Step 1:]
Assume accuracy level $\varepsilon$. Set $h_1\gets l_C(z_1)$, $h_2\gets l_C(z_2)$. Proceed to step 2.
\item[Step 2:]
If $|z_2-z_1|<\varepsilon$, then STOP, $\bar{z}=z_2$. Otherwise, go to step 3.
\item[Step 3:]
Set $h_{12}=\frac{h_2-h_1}{z_2-z_1}$, $z_3\gets z_2-l_2/h_{12}$, and $h_3\gets l_C(z_3)$. If $h_2h_3>0$, then set $h_{23}=\frac{h_3-h_2}{z_3-z_2}$, $z_2\gets z_1$, $h_2\gets h_1h_{23}/h_{12}$. Go to Step 4.
\item[Step 4:]
If $z_2<z_3$, set $z_1\gets z_2$, $h_1\gets h_2$, $z_2\gets z_3$, $h_2\gets h_3$. Otherwise, set $z_1\gets z_3$, $h_1\gets h_3$. Go back to Step 2.
\end{enumerate}
\end{algorithm}

Finally, we are able to present two solution methods. We start with the bisection method (Algorithm \ref{alg2}, see also \cite{ref_anh2012}), where the starting point is generated by the geometric means of rows of $A$.

\begin{algorithm}
\caption{Main algorithm 1}\label{alg2}
\begin{enumerate}
\item[Step 1:]
Assume the accuracy level $\varepsilon>0$. Let $v_i^\star=\left(\prod_{j=1}^n{a_{ij}}\right)^{1/n}$ and $v_i=v_i^\star/v_1^\star$ for $i=1,2,\dots,n$. Let $z=z_{\max}=G_\infty(A,v)$ and $z_{\min}=0$. Compute $w_j=\ln v_j$, $j=1,\dots,n$. Proceed to step 2.
\item[Step 2:]
If $z-z_{\min}<\varepsilon$, then STOP. Compute $v_j=\exp(w_j)$, $j=1,\dots,n$. Vector $v$ is the desired approximation of the optimal weight vector. Otherwise, go to step 3.
\item[Step 3:]
Set $z:=(z_{\max}+z_{\min} )/2$. Construct network $N(z)$. Apply the Network Simplex Method to find the shortest distances in $N$ from node $1$ to all the nodes. If there is a solution with distances equal to $w_j$, $j=1,\dots,n$, then go to step 4. Otherwise, go to step 5.
\item[Step 4:]
Set $z_{\max}\gets z$. Go back to step 2.
\item[Step 5:]
A negative cycle $C$ has been found in the network. Set $z_{\min}\gets z$. Go back to step 2.
\end{enumerate}
\end{algorithm}

We can easily prove the following proposition.

\begin{myproposition}
After a finite number of iterations, Algorithm \ref{alg2} terminates by finding an $\varepsilon$-optimal solution.
\end{myproposition}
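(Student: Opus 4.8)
The statement asserts that Algorithm~\ref{alg2} (bisection) terminates in finitely many iterations with an $\varepsilon$-optimal solution. My plan is to establish three things: (i) the invariant that the optimal $z^\star$ always lies in $[z_{\min},z_{\max}]$, (ii) that each iteration halves the width of this interval, and (iii) that the stopping criterion guarantees $\varepsilon$-optimality upon termination.

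First I would set up the loop invariant. At initialization, $z_{\min}=0$ and $z_{\max}=G_\infty(A,v)$ for the geometric-mean vector $v$; since that $v$ is feasible for (\ref{problem21})--(\ref{problem22}) with objective value $z_{\max}$, and since the system (\ref{log_system1})--(\ref{log_system3}) is necessarily inconsistent at $z=0$ (unless $A$ is already consistent, a trivial case handled separately), we have $z^\star\in[z_{\min},z_{\max}]$. For the inductive step, at the midpoint $z=(z_{\min}+z_{\max})/2$ the algorithm solves a Shortest Path Problem on $N(z)$: by Proposition~\ref{neg_cycle}, either (\ref{log_system1})--(\ref{log_system3}) is feasible, in which case $z$ is an achievable objective value so $z^\star\le z$ and we correctly set $z_{\max}\gets z$; or a negative cycle is found, in which case the system is inconsistent, so $z^\star> z$ and we correctly set $z_{\min}\gets z$. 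Either way the invariant $z^\star\in[z_{\min},z_{\max}]$ is preserved.

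Next I would track the interval width. Each pass through Step~3 replaces one endpoint by the midpoint, so the new width is exactly half the old one; after $k$ iterations the width is $z_{\max}^{(0)}/2^k$. Hence for any $\varepsilon>0$ the condition $z_{\max}-z_{\min}<\varepsilon$ in Step~2 is met after at most $\lceil\log_2(z_{\max}^{(0)}/\varepsilon)\rceil$ iterations, giving finiteness. On termination, the returned $z=z_{\max}$ satisfies $z^\star\le z_{\max}$ and $z_{\max}-z^\star\le z_{\max}-z_{\min}<\varepsilon$, so $z$ is within $\varepsilon$ of the optimum; the vector $v$ reconstructed via $v_j=\exp(w_j)$ from the shortest distances at the last feasible $z$ is feasible by Proposition~\ref{neg_cycle}(1) and attains objective value at most $z_{\max}$, hence it is an $\varepsilon$-optimal solution.

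The main obstacle is not in the bisection arithmetic, which is routine, but in pinning down precisely what ``$\varepsilon$-optimal'' means for the \emph{solution vector} $v$ as opposed to the scalar $z$, and in verifying that the feasibility test in Step~3 faithfully realizes the dichotomy of Proposition~\ref{neg_cycle}. In particular one must argue that at each accepted (feasible) midpoint the Network Simplex Method indeed returns distances $w_j$ satisfying all of (\ref{log_system1})--(\ref{log_system3}), so that the final reconstructed $v$ is genuinely feasible for the original problem and its true objective value $G_\infty(A,v)$ does not exceed the current $z_{\max}$. Once this correspondence between the network feasibility and the original constraints is invoked (it is exactly Proposition~\ref{neg_cycle}), the remaining estimates are immediate.
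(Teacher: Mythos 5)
Your proof is correct and follows essentially the same route as the paper's: both rest on the observation that each pass through Step~3 halves the width $z_{\max}-z_{\min}$, so the stopping criterion is met after at most roughly $\log_2\left(z_{\max}^{(0)}/\varepsilon\right)$ iterations. The only difference is one of thoroughness, not of method: you additionally verify the loop invariant $z^\star\in[z_{\min},z_{\max}]$ via Proposition~\ref{neg_cycle} and the $\varepsilon$-optimality of the reconstructed vector $v$, points the paper's one-line proof leaves implicit.
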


\begin{proof}
In every step of Algorithm \ref{alg2} the value of the difference $z_{max}-z_{min}$ decreases to its half, so in a finite number of iterations we obtain the approximation of the optimal solution. More precisely, if $z_{max}^\star$ denotes the initial value of $z_{max}$, then the algorithm will stop after at most $\log_2{\left\lceil z_{max}^\star/\varepsilon\right\rceil}$ steps.
\end{proof}

We finish this section with the second proposed algorithm (Algorithm \ref{alg3}).

\begin{algorithm}
\caption{Main algorithm 2}\label{alg3}
\begin{enumerate}
\item[Step 1:]
Assume the accuracy level $\varepsilon>0$. Let $z=0$. Proceed to step 2.
\item[Step 2:]
Construct network $N(z)$. Apply the Network Simplex Method to find the shortest distances in $N$ from node $1$ to all the nodes. If there is a solution with distances equal to $w_j$, $j=1,\dots,n$, then STOP, the entries $v_j=\exp(w_j)$, $j=1,\dots,n$, form the desired approximation of the optimal weight vector. Otherwise (if there is a negative cycle $C$ in the network), go to step 3.
\item[Step 3:]
Find an $\varepsilon$-approximation $\bar{z}$ of the root of $l_C(z)=0$ using the Algorithm \ref{alg02}. Set $z\gets \bar{z}$. Go back to step 2.
\end{enumerate}
\end{algorithm}

We can easily prove the following proposition.

\begin{myproposition}
After a finite number of iterations, Algorithm \ref{alg3} terminates by finding an $\varepsilon$-optimal solution.
\end{myproposition}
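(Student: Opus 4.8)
The plan is to establish termination by showing that each iteration of Algorithm \ref{alg3} produces a strictly increasing sequence of $z$-values that is bounded above by the optimal value $z^\star$, and that this sequence converges to $z^\star$. The key structural fact, inherited from the earlier propositions, is that consistency of (\ref{log_system1})-(\ref{log_system3}) is a monotone property in $z$: by Proposition \ref{lij_properties} each $l_{ij}(z)$ is strictly increasing, hence so is $l_C(z)$ for every cycle $C$, and by Proposition \ref{neg_cycle} the system is inconsistent exactly when some cycle has negative length. Thus there is a well-defined threshold $z^\star$ (the optimal value of (\ref{problem21})-(\ref{problem22})) such that the system is inconsistent for $z<z^\star$ and feasible for $z\ge z^\star$.

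\emph{First} I would argue that the sequence of $z$-values generated in Step 3 is strictly increasing. At a given iterate $z$ for which the network $N(z)$ contains a negative cycle $C$, we have $l_C(z)<0$; since $l_C$ is continuous and strictly increasing, the equation $l_C(\bar z)=0$ has a unique root $\bar z>z$, and Algorithm \ref{alg02} returns an $\varepsilon$-approximation of it. Hence each new $z$ strictly exceeds the previous one. \emph{Second} I would show the sequence stays below $z^\star$ (up to the tolerance): because $C$ is a genuine negative cycle at $z$ and $l_C$ is strictly increasing, $l_C$ stays negative on $[z,\bar z)$, so by the intermediate proposition the system remains inconsistent throughout, forcing $\bar z\le z^\star$. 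Combined with strict monotonicity, the iterates form an increasing sequence bounded above, so termination can only fail if the sequence accumulates strictly below $z^\star$.

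\emph{Next} I would rule out such accumulation by invoking finiteness of the combinatorial structure: the network is a complete digraph on $n$ nodes, so there are only finitely many simple cycles. Each time Step 2 reports infeasibility it returns a negative cycle, and the root $\bar z$ of $l_C(z)=0$ makes that particular cycle nonnegative at the next iterate; since $l_C$ is strictly increasing, a cycle once driven to nonnegative length can never again be the certificate of infeasibility at any larger $z$. Thus no cycle can be selected twice, and after at most finitely many iterations (bounded by the number of cycles) Step 2 must instead find feasible shortest distances and the algorithm stops with an $\varepsilon$-optimal $v$.

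\emph{The main obstacle} I anticipate is handling the $\varepsilon$-tolerance carefully: Algorithm \ref{alg02} returns only an approximate root, so I must confirm that the approximate $\bar z$ still lies strictly above the current $z$ and does not overshoot $z^\star$ in a way that breaks the ``no cycle repeats'' argument. This requires noting that the approximation error is controlled by $\varepsilon$ and that the finite separation between the distinct roots $l_C(z)=0$ over all cycles $C$ gives a positive gap, so for sufficiently small $\varepsilon$ the discrete argument goes through; the resulting $z$ then differs from $z^\star$ by at most a quantity governed by $\varepsilon$, yielding the claimed $\varepsilon$-optimality.
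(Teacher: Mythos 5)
Your core argument coincides with the paper's: since each $l_C$ is strictly increasing, a cycle whose length has been made nonnegative can never again serve as the certificate of infeasibility in Step 2, so the number of iterations is bounded by the finite number of cycles of the complete digraph on $n$ nodes (the paper bounds this by $\sum_{k=2}^{n}\binom{n}{k}(k-1)!<(n+1)!$). Where you diverge is precisely the ``main obstacle'' you flag, and there your patch is both weaker than the proposition and not fully sound. You handle the possibility that Algorithm \ref{alg02} returns a value \emph{below} the true root of $l_C(z)=0$ by invoking a positive separation between the roots of distinct cycles together with a ``sufficiently small $\varepsilon$'' hypothesis; but the proposition asserts termination for \emph{every} $\varepsilon>0$, and the separation-of-roots argument does not address the actual danger, which is the \emph{same} cycle being returned again after an undershoot (a scenario your gap between distinct cycles' roots says nothing about). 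The paper's resolution is structural and needs no smallness assumption: Algorithm \ref{alg02} returns $\bar z=z_2$, the \emph{upper} endpoint of the bracketing interval, at which $l_C(\bar z)\geq 0$ holds exactly (not approximately) by the regula-falsi bracketing invariant; hence the selected cycle is permanently eliminated at the very next iterate, for any tolerance, and the no-repeat counting argument closes immediately. Your monotone-increasing-sequence and accumulation-point framing is correct but superfluous once that finite no-repeat bound is available.
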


\begin{proof}
The choice of the upper bound (i.e., $z_2$) as the approximation of $\bar{z}$ in Algorithm \ref{alg02} assures that a negative cycle $C$ can appear only once in the algorithm. Thus the number of steps is bounded from above by the number of cycles in the complete digraph on $n$ vertices, which is equal to
$$\sum_{k=2}^{n}{{n\choose k} (k-1)!}<(n+1)!$$
\end{proof}

\section{Uniqueness of the solution}\label{section_unique}

One of the nice features of an optimization problem is the fact that it has a unique optimal solution. In such a case there is no need to introduce additional rules in order to choose the final solution. Although the problem (\ref{problem11})-(\ref{problem12}) does not have this property in general, we are going to characterize the necessary conditions for the uniqueness of the solution.

In the case when the problem has more than one optimal solution, we are going to characterize the set of optimal solutions exactly. We also propose a way for deriving a unique optimal solution after introducing an  additional, rather obvious, criterion.

Let us consider the system (\ref{new_system1})-(\ref{new_system6}). We can rewrite it in the following way:
\begin{eqnarray}
v_i\geq \max\{a_{ij}-z,\frac{1}{a_{ji}+z}\}v_j=L_{ij}(z) v_j,1\leq i<j\leq n,\label{proof_system1}\\
v_j\geq \max\{a_{ji}-z,\frac{1}{a_{ij}+z}\} v_i=L_{ji}(z) v_i,1\leq i<j\leq n,\label{proof_system2}\\
v_j> 0,j=1,\dots, n,\label{proof_system3}\\
v_1=1.\label{proof_system4}
\end{eqnarray}

Obviously, the functions $L_{ij}(\cdot)$ are strictly decreasing and continuous for $z\geq 0$ (it follows directly from the fact that $L_{ij}(z)=\exp(-l_{ij}(z))$ for every $z\geq 0$ and from the Proposition \ref{lij_properties}). Moreover, the following is true.

\begin{myobservation}\label{not_both_binding}
For every $1\leq i<j\leq n$, $L_{ij}(z)L_{ji}(z)=1$ if and only if $z=0$.
\end{myobservation}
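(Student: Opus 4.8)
The plan is to convert the multiplicative identity $L_{ij}(z)L_{ji}(z)=1$ into an additive one by passing to logarithms. The excerpt already records $L_{ij}(z)=\exp(-l_{ij}(z))$ (and symmetrically $L_{ji}(z)=\exp(-l_{ji}(z))$) for all $z\ge 0$, so I would write
$$
L_{ij}(z)L_{ji}(z)=\exp\bigl(-(l_{ij}(z)+l_{ji}(z))\bigr).
$$
Setting $\varphi(z):=l_{ij}(z)+l_{ji}(z)$, the identity $L_{ij}(z)L_{ji}(z)=1$ becomes equivalent to $\varphi(z)=0$. Geometrically $\varphi$ is the length of the digon $C=(i,j,i)$, so the claim says this two-arc cycle has zero length only at $z=0$; but for the proof all that matters is that the problem is now reduced to locating the zeros of a single scalar function on $[0,\infty)$.

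Next I would invoke monotonicity. By Proposition~\ref{lij_properties} both $l_{ij}$ and $l_{ji}$ are strictly increasing and continuous on $[0,\infty)$, hence so is their sum $\varphi$. A strictly monotone function is injective, so $\varphi$ can vanish at most once, and it remains only to pin down that single zero. Evaluating at $z=0$ and using the reciprocity $a_{ji}=1/a_{ij}$ from (\ref{cond1}), I obtain $l_{ij}(0)=-\ln\max\{a_{ij},1/a_{ji}\}=-\ln a_{ij}$ and $l_{ji}(0)=-\ln\max\{a_{ji},1/a_{ij}\}=\ln a_{ij}$, whence $\varphi(0)=0$. Strict monotonicity then forces $\varphi(z)>0$ for every $z>0$, and therefore $\varphi(z)=0$ if and only if $z=0$, which is exactly the asserted equivalence.

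The one genuinely load-bearing step is the \emph{if} direction, namely that the product really equals $1$ at $z=0$: this is where the reciprocity $a_{ij}a_{ji}=1$ enters, since without it one would have $L_{ij}(0)L_{ji}(0)=a_{ij}a_{ji}\neq 1$ in general and the observation would simply be false. I would therefore make this use of (\ref{cond1}) explicit rather than leave it hidden inside the evaluation of the maxima. The monotonicity argument also delivers slightly more than is asked: since $\varphi(z)>0$ for $z>0$, one gets $L_{ij}(z)L_{ji}(z)=\exp(-\varphi(z))<1$ for all $z>0$, and this strict inequality is presumably the form actually exploited in the subsequent characterization of the optimal set.
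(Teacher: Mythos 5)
Your proof is correct and follows essentially the same route as the paper's: both verify the identity at $z=0$ via the reciprocity $a_{ij}a_{ji}=1$, and both then invoke strict monotonicity together with continuity to conclude that the critical value can be attained at most once on $[0,\infty)$. The only difference is cosmetic --- you pass to logarithms and show $\varphi=l_{ij}+l_{ji}$ is strictly increasing with $\varphi(0)=0$, whereas the paper works directly with the product $L^\star_{ij}=L_{ij}L_{ji}$, which is strictly decreasing as a product of positive strictly decreasing functions, and equals $1$ at $z=0$.
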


\begin{proof}
If $z=0$, then we have
$$
L_{ij}(z)L_{ji}(z)=\max\{a_{ij}-z,\frac{1}{a_{ji}+z}\}\cdot \max\{a_{ji}-z,\frac{1}{a_{ij}+z}\}=a_{ij}a_{ji}=1.
$$
On the other hand, let $L_{ij}(z)L_{ji}(z)=1$. Since both $L_{ij}(\cdot)$ and $L_{ji}(\cdot)$ are strictly decreasing, continuous and positive for $z\geq 0$, so is also the function $L^\star_{ij}(z)=L_{ij}(z)L_{ji}(z)$. In particular, it can take the value $1$ only at one point and we already know that it is equal to $1$ when $z=0$.
\end{proof}

The next result is as follows.

\begin{mylemma}\label{new_sol_when_not_binding}
Assume that the system (\ref{proof_system1})-(\ref{proof_system4}) has a feasible solution $v=(v_1,\dots,v_n)$ for a given $z=z_1>0$. If all the inequalities (\ref{proof_system1})-(\ref{proof_system2}) are not binding, i.e.
\begin{eqnarray}
v_i>L_{ij}(z) v_j,1\leq i<j\leq n,\label{proof2_system1}\\
v_j>L_{ji}(z) v_i,1\leq i<j\leq n,\label{proof2_system2}
\end{eqnarray}
then there exists a number $z_2<z_1$, such that $v$ is a feasible solution of the system (\ref{proof_system1})-(\ref{proof_system4}) also for $z=z_2$.
\end{mylemma}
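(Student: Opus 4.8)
The plan is to treat feasibility as the simultaneous positivity of a finite family of slack functions and to exploit the continuity recorded in Proposition~\ref{lij_properties}. For each pair $(i,j)$ with $1\le i<j\le n$, define the slacks
\begin{eqnarray}
g_{ij}(z)=v_i-L_{ij}(z)v_j,\qquad g_{ji}(z)=v_j-L_{ji}(z)v_i.\nonumber
\end{eqnarray}
Since $L_{ij}(z)=\exp(-l_{ij}(z))$ and $l_{ij}$ is continuous on $[0,\infty)$, each $g_{ij}$ and $g_{ji}$ is a continuous function of $z$. The hypotheses (\ref{proof2_system1})-(\ref{proof2_system2}) state precisely that $g_{ij}(z_1)>0$ and $g_{ji}(z_1)>0$ for every relevant pair.

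First I would invoke continuity at the single point $z_1$: for each pair, since $g_{ij}(z_1)>0$ and $g_{ij}$ is continuous, there is a $\delta_{ij}>0$ with $g_{ij}(z)>0$ whenever $|z-z_1|<\delta_{ij}$, and likewise a $\delta_{ji}>0$ for $g_{ji}$. Because the number of pairs is finite, I would take $\delta=\min_{i<j}\{\delta_{ij},\delta_{ji}\}>0$, so that all the strict inequalities (\ref{proof_system1})-(\ref{proof_system2}) hold simultaneously for every $z\in(z_1-\delta,z_1+\delta)$. (Note $z_1>0$ is an interior point of the domain $[0,\infty)$, so this two-sided neighborhood is legitimate.)

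It then remains to select a valid $z_2$ strictly below $z_1$ that still lies in $[0,\infty)$. Since $z_1>0$, the interval $(\max\{0,z_1-\delta\},z_1)$ is nonempty, and I would pick any $z_2$ in it. For such a $z_2<z_1$ the constraints (\ref{proof_system1})-(\ref{proof_system2}) hold (in fact strictly), while (\ref{proof_system3}) and (\ref{proof_system4}) are unaffected since they do not involve $z$; hence $v$ is feasible at $z=z_2$, as required.

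The argument is essentially a one-sided continuity-plus-finiteness statement, so there is no substantive obstacle; the only point requiring a little care is ensuring $z_2$ stays nonnegative, which is guaranteed by $z_1>0$. It is worth emphasizing the role of strictness: by Proposition~\ref{lij_properties} each $L_{ij}$ \emph{increases} as $z$ decreases, so passing from $z_1$ to $z_2<z_1$ genuinely tightens the constraints, and it is exactly the strictly positive slack $g_{ij}(z_1)>0$ that is consumed over the interval $(z_2,z_1)$.
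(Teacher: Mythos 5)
Your proof is correct and follows essentially the same route as the paper's: both arguments exploit the strict positivity of the slacks at $z_1$, the continuity (and monotonicity) of the functions $L_{ij}$, and the finiteness of the constraint set to extract a uniform decrease $z_2<z_1$ preserving feasibility. The only cosmetic difference is that the paper defines for each pair the \emph{maximal} admissible decrease $\delta_{ij}=\max\{\delta \mid v_i\geq L_{ij}(z_1-\delta)v_j\}$ and takes the minimum, whereas you take a continuity neighborhood per constraint, which amounts to the same thing.
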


\begin{proof}
For every $1\leq i\neq j\leq n$, and for every $0<\delta_{ij}\leq z_1$, we have $L_{ij}(z_1-\delta_{ij})>L_{ij}(z_1)$. Let us define $\delta_{ij}$ as
$$
\delta_{ij}=\max\{\delta|v_i\geq L_{ij}(z_1-\delta) v_j\}.
$$
Each $\delta_{ij}$ is well-defined, as it is bounded from above by $z_1$ and uniquely defined, as $L_{ij}(\cdot)$ is strictly decreasing and continuous. Let
$$
\hat\delta=\min\{\delta_{ij}|1\leq i\neq j\leq n\}.
$$
Observe that $\hat\delta>0$ and
$$
v_i\geq L_{ij}(z_1-\hat\delta) v_j
$$
for each $1\leq i\neq j\leq n$. Thus $v$ is a feasible solution also for $z=z_2=z_1-\hat\delta$.
\end{proof}

Now let us deal with the case when some of the inequalities (\ref{proof_system1})-(\ref{proof_system2}) are binding. Let us define the digraph $D=D(v,z)$ in the following way. Let the vertex set of $D$ be $V(D)=\{1,\dots,n\}$ and let the arcs set of $D$ be $A(D)=\{(i,j)|v_i=L_{ij}(z)v_j\}$. Then the following observation is true.

\begin{mylemma}\label{new_sol_find_not_binding}
Assume that the system (\ref{proof_system1})-(\ref{proof_system4}) has a feasible solution $v=(v_1,\dots,v_n)$ for a    given $z>0$. If there is no (directed) cycle in $D(v,z)$, then there exists a feasible solution $v^\prime=(v_1^\prime,\dots,v_n^\prime)$ of (\ref{proof_system1})-(\ref{proof_system4}) such that all the inequalities (\ref{proof_system1})-(\ref{proof_system2}) are not binding.
\end{mylemma}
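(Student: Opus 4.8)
The plan is to exploit the acyclicity of $D(v,z)$ to construct a small perturbation of the feasible point $v$ that strictly satisfies every constraint. It is cleanest to pass to the logarithmic variables $w_k=\ln v_k$ used in (\ref{log_system1})--(\ref{log_system3}). In these variables the constraints (\ref{proof_system1})--(\ref{proof_system2}) become the difference inequalities $w_j-w_i\le l_{ij}(z)$, and a binding inequality $v_i=L_{ij}(z)v_j$ is exactly an equality $w_j-w_i=l_{ij}(z)$, i.e.\ an arc $(i,j)\in A(D)$. Thus making all inequalities non-binding amounts to finding $w'$ with $w'_j-w'_i<l_{ij}(z)$ for every ordered pair, and it suffices to strictly relax the finitely many arcs of $D$ while not destroying the strict slack already present on the non-arcs.

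First I would choose a perturbation direction from a topological order. Since $D$ has no directed cycle it is acyclic, so there is a function $f$ on the node set with $f(i)>f(j)$ whenever $(i,j)\in A(D)$ (for instance $f(k)=-\sigma(k)$, where $\sigma$ is the rank of $k$ in a topological ordering). Set $w'_k=w_k+\varepsilon f(k)$ for a parameter $\varepsilon>0$ to be fixed. For an arc $(i,j)\in A(D)$ the left-hand side becomes $w'_j-w'_i=l_{ij}(z)+\varepsilon\bigl(f(j)-f(i)\bigr)<l_{ij}(z)$, since $f(j)-f(i)<0$; hence every previously binding constraint becomes strict for every $\varepsilon>0$. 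For a non-arc the original slack $l_{ij}(z)-(w_j-w_i)$ is strictly positive; if $f(i)=f(j)$ it is unchanged by the perturbation, and otherwise, since the perturbation changes $w_j-w_i$ by only $\varepsilon(f(j)-f(i))$, choosing $\varepsilon$ below $\min\{(l_{ij}(z)-(w_j-w_i))/|f(j)-f(i)| : (i,j)\notin A(D),\ f(i)\ne f(j)\}$ keeps those constraints strict as well. As the index set is finite this minimum is a positive number, so such an $\varepsilon$ exists.

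It remains to restore the normalization $w_1=0$ (equivalently $v_1=1$). The constraints (\ref{proof_system1})--(\ref{proof_system2}) involve only the differences $w_j-w_i$, so subtracting the constant $w'_1$ from every $w'_k$ leaves all of them unchanged while forcing the first coordinate to $0$; in the original variables this is just rescaling $v'$ by $1/v'_1$, which preserves all ratio constraints and positivity. Setting $v'_k=\exp(w'_k-w'_1)$ then yields a feasible solution with no binding inequality, as required.

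The main obstacle I anticipate is the bookkeeping in the second paragraph: one must verify that a single $\varepsilon$ works simultaneously for all constraints, which relies on finiteness of the constraint set together with the fact that each non-binding constraint carries strictly positive slack and each binding one is relaxed strictly in the chosen direction. The acyclicity hypothesis is precisely what guarantees the existence of the monotone potential $f$; note also that, by Observation \ref{not_both_binding}, for $z>0$ the pairs $(i,j)$ and $(j,i)$ can never be binding together, so $D$ is automatically free of $2$-cycles, which is consistent with (and a special case of) the acyclicity assumption used here.
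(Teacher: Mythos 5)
Your proof is correct, but it follows a genuinely different route from the paper's. The paper stays in the multiplicative variables and argues iteratively: since $D(v,z)$ is acyclic, it has a vertex $i$ that is the tail of every arc incident with it, so every constraint $v_j\geq L_{ji}(z)v_i$ is strict; increasing $v_i$ by half the minimum slack, $\min_j \tfrac{1}{2}\bigl(v_j/L_{ji}(z)-v_i\bigr)$, turns every equality involving $v_i$ into a strict inequality while preserving all other constraints, which deletes all arcs incident with $i$; the process is then repeated on the smaller (still acyclic) graph and terminates after at most $n-1$ rounds. You instead make a single simultaneous perturbation in the logarithmic variables, $w'_k=w_k+\varepsilon f(k)$ with $f$ a potential strictly decreasing along arcs (obtained from a topological order), which relaxes all binding constraints at once for any $\varepsilon>0$, and you control $\varepsilon$ against the finitely many positive slacks of the non-binding constraints. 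Both arguments rest on the same combinatorial fact (an acyclic digraph admits a source vertex, equivalently a topological order), but yours is one-shot rather than inductive, and it buys one genuine improvement: you explicitly restore the normalization $v_1=1$ by subtracting $w'_1$ (equivalently rescaling $v'$ by $1/v'_1$), a point the paper's proof passes over --- if the source vertex selected there happens to be vertex $1$, the update changes $v_1$ and feasibility of (\ref{proof_system4}) is only recovered by exactly the rescaling you spell out, which is legitimate because all the remaining constraints are homogeneous in $v$. What the paper's vertex-by-vertex formulation buys in exchange is structural information (arcs at a vertex lying on no cycle can be removed without touching the rest), which is reused later in the proof of Lemma \ref{many_solutions_if_cycle}.
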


\begin{proof}
If there is no cycle in $D$, then there is a vertex $i\in V(D)$ being the start vertex of all the arcs incident with it. It means in turn that there is a variable $v_i$ such that
\begin{eqnarray}
v_j>L_{ji}(z)v_i, j=1,\dots,n, j\neq i.\label{ineq_dominant}
\end{eqnarray}
The last inequalities follow from the fact that both $v_i\geq L_{ij}(z)v_j$ and $v_j\geq L_{ji}(z)v_i$ cannot be simultaneously binding when $z>0$, see Observation \ref{not_both_binding}. Now we are going to find a new solution in which no inequality where $v_i$ occurs, is binding. Let us define $\delta_{ij}, j=1,\dots,n, j\neq i$ as follows:
$$
\delta_{ij}=\frac{1}{2}\left(\frac{v_j}{L_{ji}(z)}-v_i\right).
$$
Now let
$$
v_i^\prime=v_i+\min\{\delta_{ij}|j=1,\dots,n, j\neq i\}.
$$
The solution $v^{\prime}$, where we substitute the value of $v_i$ with $v_i^\prime$, does not change any inequality where $v_i$ does not occur, changes all the equalities with $v_i$ to sharp inequalities and preserves all the inequalities (\ref{ineq_dominant}). It means that the new solution is feasible, and the graph $D(v^{\prime},z)$ is a proper subgraph of $D(v,z)$, i.e. $A(D(v^{\prime},z))\subsetneq A(D(v,z))$. To be more specific, all the arcs incident with $i$ have been removed from $D$. Observe also that $D(v^{\prime},z)$ is acyclic. Now if $D(v^{\prime},z)$ is still nonempty, then we substitute $v:=v^{\prime}$ and repeat the process. Of course after at most $n-1$ steps we finally obtain a solution $v$ for which $A(D(v,z))=\emptyset$, i.e. there are no binding equalities among (\ref{proof_system1})-(\ref{proof_system2}). This finishes the proof.
\end{proof}

The two lemmas presented above lead us to the following conclusion.

\begin{mycorollary}\label{no_cycle_no_optimum}
Assume that the system (\ref{proof_system1})-(\ref{proof_system4}) has a feasible solution $v=(v_1,\dots,v_n)$ for given $z>0$. Assume that there is no (directed) cycle in $D(v,z)$. Then $v$ is not an optimal solution of the problem (\ref{problem11})-(\ref{problem12}).
\end{mycorollary}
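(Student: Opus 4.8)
The plan is to use the two lemmas in succession to build, starting from $v$, a feasible vector whose $G_\infty$ value is strictly below that of $v$, which rules out optimality of $v$ at once. The value $z$ at which optimality must be tested is the objective value of $v$, i.e.\ $z=G_\infty(A,v)$: this is the smallest parameter for which $v$ satisfies (\ref{proof_system1})--(\ref{proof_system4}), and it is the only value at which the optimality of $v$ can fail. I would record at the outset the equivalence on which everything rests: for any parameter $z'>0$, a vector $u$ is feasible for (\ref{proof_system1})--(\ref{proof_system4}) if and only if $G_\infty(A,u)\le z'$, since the pair (\ref{proof_system1})--(\ref{proof_system2}) for indices $i<j$ is exactly the conjunction of $|a_{ij}-u_i/u_j|\le z'$ and $|a_{ji}-u_j/u_i|\le z'$.

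First I would invoke Lemma \ref{new_sol_find_not_binding}. Because $D(v,z)$ has no directed cycle, the lemma returns a feasible solution $v'$ of (\ref{proof_system1})--(\ref{proof_system4}) for the same $z$ in which none of the inequalities (\ref{proof_system1})--(\ref{proof_system2}) is binding. I would then pass $v'$ to Lemma \ref{new_sol_when_not_binding}: since all of its inequalities are strict and $z>0$, the lemma yields a parameter $z_2<z$ for which $v'$ remains feasible in (\ref{proof_system1})--(\ref{proof_system4}). By the equivalence above this reads $G_\infty(A,v')\le z_2<z=G_\infty(A,v)$, so $v'$ is admissible for (\ref{problem11})--(\ref{problem12}) with a strictly smaller objective than $v$; hence $v$ is not optimal. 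Equivalently, in contradiction form, assuming $v$ optimal makes the system infeasible for every parameter below $G_\infty(A,v)$, which the feasible pair $(v',z_2)$ contradicts.

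The step that needs the most care is the identification $z=G_\infty(A,v)$ and the resulting translation of ``$v'$ feasible at $z_2$'' into the objective bound $G_\infty(A,v')\le z_2$; once this equivalence is in place the remainder is a direct chaining of the two lemmas. It is worth noting that both hypotheses are genuinely used: the absence of a cycle in $D(v,z)$ is precisely what activates Lemma \ref{new_sol_find_not_binding}, and the non-binding solution $v'$ it returns is exactly the input required by Lemma \ref{new_sol_when_not_binding}. Observation \ref{not_both_binding}, ensuring that at $z>0$ no complementary pair $(i,j),(j,i)$ of constraints can be tight at once, underlies the first lemma and thus the whole reduction.
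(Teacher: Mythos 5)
Your proof is correct and follows essentially the same route as the paper, which presents Corollary \ref{no_cycle_no_optimum} precisely as the consequence of chaining Lemma \ref{new_sol_find_not_binding} (no cycle yields a feasible solution with all constraints strict) with Lemma \ref{new_sol_when_not_binding} (all constraints strict yields feasibility at some $z_2<z$). Your explicit identification $z=G_\infty(A,v)$ and the equivalence ``$u$ feasible at $z'$ iff $G_\infty(A,u)\le z'$'' are left implicit in the paper, but they are the correct reading of the statement and make the final step (optimal value $\le z_2 < z$, hence $v$ not optimal) airtight.
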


Now we will focus on the case, when there is a cycle in $D(v,z)$. Let us denote with $D^\prime=D^\prime(v,z)$ the subgraph of $D$ such that $V(D^\prime)=V(D)$ and an arc of $D$ is also arc in $D^\prime$ if and only if it belongs to a (directed) cycle in $D$. In other words, $D^\prime$ consists of all the strongly connected components of $D$. The following is true.

\begin{mylemma}\label{many_solutions_if_cycle}
Assume that the system (\ref{proof_system1})-(\ref{proof_system4}) has a feasible solution $v=(v_1,\dots,v_n)$ for a    given $z>0$. Assume that there is a (directed) cycle in $D(v,z)$. Assume that $D^\prime$ has $c$ components. Then:
\begin{enumerate}
\item
$v$ is optimal solution of (\ref{problem11})-(\ref{problem12}).
\item
The set of the solutions of (\ref{proof_system1})-(\ref{proof_system4}) is a convex polytope of dimension $d=c-1$.
\end{enumerate}
\end{mylemma}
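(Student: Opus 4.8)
The plan is to establish the two assertions separately, working throughout with the network reformulation. For the optimality claim I would argue that $z$ cannot be lowered without destroying feasibility. Pick a directed cycle $C=(i_1,\dots,i_s,i_1)$ in $D(v,z)$; each of its arcs is binding, so $v_{i_t}=L_{i_ti_{t+1}}(z)v_{i_{t+1}}$, and multiplying these $s$ equalities around $C$ cancels the $v$'s and yields $\prod_{(k,l)\in C}L_{kl}(z)=1$. Since $L_{ij}(z)=\exp(-l_{ij}(z))$, this is exactly $l_C(z)=0$. By Proposition \ref{lij_properties} every $l_{ij}$, hence $l_C$, is strictly increasing, so $l_C(z')<l_C(z)=0$ for each $z'<z$; thus $C$ is a negative cycle for $z'$ and, by Proposition \ref{neg_cycle}, the system (\ref{log_system1})--(\ref{log_system3}), equivalently (\ref{proof_system1})--(\ref{proof_system4}), is infeasible for every $z'<z$. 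On the other hand a binding arc forces $|a_{ij}-v_i/v_j|=z$ for some pair, so $G_\infty(A,v)=z$; together with the minimality of $z$ just shown, this proves that $v$ solves (\ref{problem11})--(\ref{problem12}).

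For the second claim, note first that once $z$ is fixed the numbers $L_{ij}(z)$ are constants, so (\ref{proof_system1})--(\ref{proof_system4}) is a system of \emph{linear} inequalities in $v$; the constraints involving node $1$ give $L_{j1}(z)\le v_j\le 1/L_{1j}(z)$, so the feasible set is bounded and is therefore a convex polytope, call it $P$. To find $\dim P$ I would pass to the coordinates $w_j=\ln v_j$, a dimension-preserving diffeomorphism turning the constraints into the difference inequalities $w_j-w_i\le l_{ij}$ with $w_1=0$. The crucial observation is that every arc of $D'$ is binding at \emph{every} feasible point, not just at $v$: if $(i,j)$ lies on a tight cycle $C$ of $D(v,z)$ then for any feasible $v'$ one has $\prod_{(k,l)\in C}v'_k\ge\big(\prod_{(k,l)\in C}L_{kl}(z)\big)\prod_{(k,l)\in C}v'_l=\prod_{(k,l)\in C}v'_k$, and since all factors are positive this chain must collapse to equalities, making each arc of $C$ binding at $v'$. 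Taking logarithms, every feasible $w$ satisfies $w_i-w_j=-l_{ij}$ for each $(i,j)\in A(D')$.

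Now I would count dimensions. The equations $w_i-w_j=-l_{ij}$ over the arcs of $D'$ have coefficient vectors $e_i-e_j$, whose rank equals $n$ minus the number of connected components of the undirected graph underlying $D'$; because a cyclic arc never joins two different strongly connected components, that number is exactly $c$. The constraint $w_1=0$ is independent of these differences and raises the rank by one, so the affine hull has dimension $n-(n-c+1)=c-1$, giving $\dim P\le c-1$.

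The reverse inequality is the part I expect to be the main obstacle, since it forbids a smaller answer and requires global rather than merely local information. The plan is to exhibit a point of $P$ at which the only binding arcs are those of $D'$, so that the affine hull of $P$ is precisely the $(c-1)$-dimensional set above. To this end I would contract each strongly connected component of $D$ to a single node; the resulting condensation is acyclic, and the remaining binding ``bridge'' arcs of $D(v,z)$ induce a strict partial order on the $c$ components. Fixing the component of node $1$ (forced by $w_1=0$) and perturbing the $c-1$ free additive constants, one per component, in a direction strictly respecting this partial order — exactly the acyclic unbinding used in the proof of Lemma \ref{new_sol_find_not_binding} — makes every bridge arc slack while, for a small enough perturbation, leaving the already-slack arcs slack and the $D'$ arcs tight. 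Such a point is relatively interior to $P$, so $\dim P=c-1$, which completes the proof.
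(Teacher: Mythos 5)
Your proof is correct and follows essentially the same route as the paper's: part 1 via the product of the binding $L_{ij}$'s around a cycle equalling $1$ together with strict monotonicity of the arc lengths, and part 2 via per-component proportionality plus the acyclic-perturbation idea of Lemma \ref{new_sol_find_not_binding}, yielding dimension $c-1$. If anything, your write-up is more rigorous at the two points the paper leaves implicit: you explicitly prove that the arcs of $D^\prime$ are binding at \emph{every} feasible point (the product-collapse argument), and you explicitly construct a relatively interior point via the condensation DAG, whereas the paper only gestures at ``reasoning similar to Lemma \ref{new_sol_find_not_binding}.''
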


\begin{proof}
Assume that one of the cycles consists of the vertices $i_1,i_2,\dots,i_s$. Then we have
\begin{eqnarray*}
v_{i_1}= L_{i_1i_2}(z)v_{i_2},\\
v_{i_2}= L_{i_2i_3}(z)v_{i_3},\\
\vdots\quad\quad\quad\quad\quad \\
v_{i_{s-1}}= L_{i_{s-1}i_s}(z)v_{i_s},\\
v_{i_s}= L_{i_si_1}(z)v_{i_1}.\\
\end{eqnarray*}
It means in particular that
$$
\prod_{j=1}^{s}{L_{i_ji_{j+1\imod s}}(z)}=1,
$$
and consequently $z$ has reached its minimal value as the functions $L_{ij}(\cdot)$ are strictly decreasing and so any decrease of $z$ would make this product greater than $1$, while if the inequalities (\ref{proof_system1})-(\ref{proof_system2}) are satisfied, then
\begin{eqnarray*}
v_{i_1}\geq L_{i_1i_2}(z)v_{i_2},\\
v_{i_2}\geq L_{i_2i_3}(z)v_{i_3},\\
\vdots\quad\quad\quad\quad\quad \\
v_{i_{s-1}}\geq L_{i_{s-1}i_s}(z)v_{i_s},\\
v_{i_s}\geq L_{i_si_1}(z)v_{i_1},\\
\end{eqnarray*}
and thus
$$
\prod_{j=1}^{s}{L_{i_ji_{j+1\imod s}}(z)}\leq 1.
$$
This means that $v$ is an optimum of (\ref{problem11})-(\ref{problem12}).
The set of optimal solutions of (\ref{problem11})-(\ref{problem12}) is a convex polytope as it is exactly the set of feasible solutions of the linear system (\ref{proof_system1})-(\ref{proof_system4}).
On the other hand, each variable $v_j$, $j\in\{i_1,i_2,\dots,i_{s-1}\}$ may be expressed as $c_{i_1j}(z)v_{i_1}$, where $c_{i_1j}(z)$ depends only on $z$ i.e., is a constant when $z$ is fixed. As this reasoning is true for every cycle in $D^\prime(v,z)$, it follows that for every component $C_t$ of $D^\prime(v,z)$ with $s_t$ vertices, $t=1,\dots, c$, $(s_t-1)$ variables may be expressed as a chosen $s_t^{th}$ variable multiplied by a constant. On the other hand, the reasoning similar as in the proof of Lemma \ref{new_sol_find_not_binding} shows that it is not the case for the remaining variables, corresponding with the vertices not belonging to any cycle of $D(v,c)$.  This means that every component $C_t$ of $D^\prime(v,z)$ reduces the dimension of the set of solutions by $(s_t-1)$ (in particular, if it is a trivial component, it does not reduce the dimension). Since one of the variables is fixed ($v_1=1$), we obtain
$$
d=n-1-\sum_{t=1}^{c}{(s_t-1)}=n-1-\sum_{t=1}^{c}{s_t}+\sum_{t=1}^{c}{1}=n-1-n+c=c-1.
$$
\end{proof}

\begin{mycorollary}\label{all_in_cycles_one_solution}
Assume that the system (\ref{proof_system1})-(\ref{proof_system4}) has a feasible solution $v=(v_1,\dots,v_n)$. Assume that there is a (directed) cycle in $D(v,z)$. If $D^\prime=D^\prime(v,z)$ is connected, then $v$ is the only optimal solution of (\ref{problem11})-(\ref{problem12}).
\end{mycorollary}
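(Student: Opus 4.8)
The plan is to derive this corollary directly from Lemma \ref{many_solutions_if_cycle}, since essentially all of the work has already been done there. The hypotheses of the corollary are exactly those of the lemma---a feasible solution $v$ exists for the given $z$, and $D(v,z)$ contains a directed cycle---together with the extra assumption that $D^\prime$ is connected. The first step is to translate this connectivity hypothesis into the language of the lemma. Because $V(D^\prime)=\{1,\dots,n\}$ and, as established in the proof of Lemma \ref{many_solutions_if_cycle}, the components of $D^\prime$ partition all $n$ vertices (so that $\sum_{t=1}^{c}s_t=n$, with trivial single-vertex components counted in $c$), the statement that $D^\prime$ is connected is precisely the assertion that it has a single component, i.e.\ $c=1$.

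Next I would invoke the two parts of Lemma \ref{many_solutions_if_cycle} in turn. Part (1) gives that $v$ is an optimal solution of (\ref{problem11})-(\ref{problem12}), so the optimal set is nonempty and contains $v$. Part (2) states that this optimal set is a convex polytope of dimension $d=c-1$; substituting $c=1$ yields $d=0$. It then remains to observe that a nonempty convex polytope of dimension $0$ consists of a single point. Since $v$ lies in this polytope (being optimal), it must coincide with that point, and therefore $v$ is the unique optimal solution of (\ref{problem11})-(\ref{problem12}).

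There is no genuine obstacle here, as the corollary is a specialization of the dimension formula from the preceding lemma. The only point that requires a little care is the bookkeeping in the first step: one must use the convention, fixed in the proof of Lemma \ref{many_solutions_if_cycle}, that every vertex of $D$ belongs to exactly one component of $D^\prime$ (trivial components included), so that ``$D^\prime$ connected'' is correctly identified with $c=1$ rather than with some notion that ignores isolated vertices. Once this identification is made, the passage from $d=0$ to uniqueness is immediate.
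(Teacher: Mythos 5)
Your argument for the case $z>0$ coincides with the paper's: connectivity of $D^\prime$ gives $c=1$, Lemma \ref{many_solutions_if_cycle} gives optimality of $v$ and dimension $d=c-1=0$ for the optimal set, hence uniqueness. However, your opening claim that ``the hypotheses of the corollary are exactly those of the lemma'' is not correct, and this is where the gap lies: Lemma \ref{many_solutions_if_cycle} is stated only for $z>0$, whereas the corollary deliberately drops that restriction and also covers $z=0$ (the case of a consistent matrix $A$). For $z=0$ the lemma simply cannot be invoked, so your proof silently omits that case, and the dimension-counting route gives you nothing there.

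The missing case needs a short separate argument, which is exactly what the paper supplies. If $z=0$, then $L_{ij}(0)=\max\{a_{ij},1/a_{ji}\}=a_{ij}$ and $L_{ij}(0)L_{ji}(0)=1$ (Observation \ref{not_both_binding}), so feasibility of (\ref{proof_system1})--(\ref{proof_system4}) forces both $v_i\geq a_{ij}v_j$ and $v_j\geq a_{ji}v_i$, i.e.\ $v_i=a_{ij}v_j$ for all $i,j$; hence $A$ is consistent and the normalization $v_1=1$ leaves the single feasible vector $v_j=a_{j1}$. Since the objective is nonnegative, the value $z=0$ is trivially optimal, and any optimal solution must be feasible at $z=0$, hence equal to $v$. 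Adding this paragraph would make your proof complete and essentially identical to the paper's two-case proof.
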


\begin{proof}
If $z>0$, then we have $c=1$, what implies $d=0$, so the set of the optimal solutions is a point. If $z=0$, then there is exactly one solution $v_j=a_{j1}$ and $A$ is consistent.
\end{proof}

Note that the last corollary can be also deduced from the results obtained by Blanquero, Carrizosa and Conde in \cite{ref_BlaCarCon} (Corollary 11).

\begin{mycorollary}
The set of optimal solutions of (\ref{problem11})-(\ref{problem12}) is a convex polytope of dimension at most $n-3$.
\end{mycorollary}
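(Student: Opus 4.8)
The plan is to combine the dimension formula established in Lemma \ref{many_solutions_if_cycle} with the absence of short directed cycles guaranteed by Observation \ref{not_both_binding}. Recall that at an optimal solution $v$ with optimal value $z^\star$, Corollary \ref{no_cycle_no_optimum} forces $D(v,z^\star)$ to contain at least one directed cycle (otherwise $v$ would not be optimal), and Lemma \ref{many_solutions_if_cycle} then identifies the set of optimal solutions as a convex polytope of dimension $d=c-1$, where $c$ is the number of components of $D^\prime(v,z^\star)$ and $\sum_{t=1}^{c} s_t = n$, with $s_t$ the number of vertices in the $t$-th component. The task thus reduces to proving the sharper bound $c\le n-2$.

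The key step is to bound the size of a nontrivial component from below. By Observation \ref{not_both_binding}, for $z^\star>0$ the two inequalities $v_i\ge L_{ij}(z^\star)v_j$ and $v_j\ge L_{ji}(z^\star)v_i$ can never be binding simultaneously, so $D(v,z^\star)$ contains no pair of opposite arcs $(i,j),(j,i)$; that is, $D$ is an oriented graph with no $2$-cycle. Consequently every directed cycle in $D$ has length at least $3$, and therefore every nontrivial strongly connected component of $D$ — i.e.\ every component of $D^\prime$ that carries a cycle — must contain at least $3$ vertices (a two-vertex strongly connected component would require a digon).

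Since the existence of a cycle is guaranteed, at least one component, say $C_{t_0}$, satisfies $s_{t_0}\ge 3$, while each of the remaining $c-1$ components contributes at least one vertex. This gives $n=\sum_{t=1}^{c} s_t\ge 3+(c-1)=c+2$, hence $c\le n-2$ and $d=c-1\le n-3$, as claimed. The only remaining case is $z^\star=0$: then $A$ is consistent and, by Corollary \ref{all_in_cycles_one_solution}, the optimum is a single point of dimension $0\le n-3$, the statement being understood for $n\ge 3$, the genuinely interesting regime.

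I expect the main obstacle to be the correct bookkeeping rather than any deep difficulty. One must keep in mind that $c$ counts \emph{all} components of $D^\prime$, including the trivial isolated-vertex ones, so that $\sum_t s_t=n$ holds exactly as in the proof of Lemma \ref{many_solutions_if_cycle}. The genuinely decisive point is that the digon-free conclusion of Observation \ref{not_both_binding} is precisely what upgrades the naive estimate $s_{t_0}\ge 2$ (which would only yield $d\le n-2$) to the sharp $s_{t_0}\ge 3$; verifying that a nontrivial strongly connected component of a digon-free oriented graph really must have at least three vertices is the crux of the argument.
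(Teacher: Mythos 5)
Your proof is correct and follows essentially the same route as the paper: Observation \ref{not_both_binding} rules out digons, so every nontrivial strongly connected component of $D^\prime(v,z)$ has at least $3$ vertices, Corollary \ref{no_cycle_no_optimum} guarantees at least one such component, and the dimension formula of Lemma \ref{many_solutions_if_cycle} then yields the bound --- your bookkeeping $c\le n-2$ is algebraically identical to the paper's estimate $\sum_{t}(s_t-1)\ge 2$, since $\sum_t s_t = n$. The only difference is that you also treat the case $z^\star=0$ explicitly, which the paper's proof silently omits.
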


\begin{proof}
If $z>0$, then any cycle in $D^\prime(v,z)$ must have at least $3$ vertices (see Observation \ref{not_both_binding}). This implies that each component of $D^\prime(v,z)$ having more than one vertex must consist of at least $3$ vertices and from Corollary \ref{no_cycle_no_optimum} it follows that there must be at least one such component when $z>0$. From the proof of Lemma \ref{many_solutions_if_cycle} it follows that
$$
d=n-1-\sum_{t=1}^{c}{(s_t-1)}\leq n-1-2= n-3.
$$
\end{proof}

\begin{mycorollary}
The problem (\ref{problem11})-(\ref{problem12}) has one solution if $n\leq 3$.
\end{mycorollary}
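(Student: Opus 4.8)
The plan is to read this off directly from the dimension bound established in the preceding corollary. First I would record that the set of optimal solutions of (\ref{problem11})-(\ref{problem12}) is a \emph{nonempty} convex polytope: it coincides with the feasible set of the linear system (\ref{proof_system1})-(\ref{proof_system4}) evaluated at the optimal value of $z$ (hence a convex polytope), and it is nonempty because the objective is bounded below by $0$ and attained, for instance by the vector produced by the methods of Section \ref{section_alg}. By the previous corollary, the dimension $d$ of this polytope satisfies $d \le n-3$.

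The case $n = 3$ is then immediate: the bound gives $d \le 0$, and since a nonempty convex polytope always has dimension at least $0$, we conclude $d = 0$. A $0$-dimensional nonempty convex polytope is a single point, so the optimal solution is unique.

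The only delicate point is $n \le 2$, where $n-3$ is negative and the inequality $d \le n-3$ cannot be applied literally to a nonempty set. Here I would instead argue directly, along the lines of Corollary \ref{no_cycle_no_optimum} and Observation \ref{not_both_binding}. If the optimal value satisfied $z > 0$, then by Corollary \ref{no_cycle_no_optimum} the digraph $D(v,z)$ would contain a directed cycle; but by Observation \ref{not_both_binding} the inequalities (\ref{proof_system1}) and (\ref{proof_system2}) for a fixed pair $i,j$ cannot both be binding when $z>0$, so every such cycle must use at least three vertices, which is impossible when $n \le 2$. Hence the optimum is attained at $z = 0$, where $A$ is consistent and, by Corollary \ref{all_in_cycles_one_solution}, the unique solution is $v_j = a_{j1}$.

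I expect the main (and essentially the only) obstacle to be exactly this small-$n$ bookkeeping. The dimension formula $d = c-1$ from Lemma \ref{many_solutions_if_cycle} that underlies the previous corollary is meaningful only in the regime $z>0$ where a $3$-cycle actually fits inside $D^\prime(v,z)$; for $n \le 2$ no such cycle exists, so rather than substituting blindly into $d \le n-3$ one must separately observe that the positive-$z$ scenario is vacuous and the optimum necessarily falls in the consistent case handled by Corollary \ref{all_in_cycles_one_solution}. Once this is noted, the uniqueness for all $n \le 3$ follows uniformly.
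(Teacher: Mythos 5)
Your proposal is correct and, for the substantive case $n=3$, is exactly the paper's argument: the preceding corollary gives $d\le n-3\le 0$, hence $d=0$, and a $0$-dimensional convex polytope is a single point. The paper's own proof stops there and silently applies the same substitution for $n\le 2$; your separate treatment of that case (when $z>0$, Corollary \ref{no_cycle_no_optimum} would force a directed cycle in $D(v,z)$, which by Observation \ref{not_both_binding} needs at least three vertices, so the optimum must occur at $z=0$ with the unique consistent solution $v_j=a_{j1}$) repairs a genuine imprecision, since the literal bound $d\le n-3<0$ cannot hold for a nonempty set.
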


\begin{proof}
From the previous corollary it follows that in such a case $d\leq 0$, what implies that $d=0$ and the set of optimal solutions is a point.
\end{proof}



The main result of this section is the following theorem, being an immediate consequence of the Corollaries \ref{no_cycle_no_optimum} and \ref{all_in_cycles_one_solution}.

\begin{mytheorem}\label{cond_sol_unique}
The problem (\ref{problem11})-(\ref{problem12}) has a unique optimal solution if and only if for some $z\ge 0$ there exists a feasible solution $v=(v_1,\dots,v_n)$ resulting with the objective value $z$, such that all the (directed) cycles of $D(v,z)$ induce a connected subgraph of $D(v,z)$ that covers all the vertices of $D(v,z)$. In such a case, $z$ is the optimal value of the objective function.
\end{mytheorem}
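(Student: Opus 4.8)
The plan is to prove the two implications of the equivalence by invoking Corollaries~\ref{no_cycle_no_optimum} and~\ref{all_in_cycles_one_solution} directly, after recording one structural fact: at the optimal value $z^\star$ of the objective, the set of optimal solutions of~(\ref{problem11})--(\ref{problem12}) coincides exactly with the feasible set of the linear system~(\ref{proof_system1})--(\ref{proof_system4}) for $z=z^\star$, which by Lemma~\ref{many_solutions_if_cycle} is a convex polytope of dimension $c-1$, where $c$ is the number of components of $D^\prime(v,z^\star)$. Uniqueness is therefore equivalent to this polytope being a single point, i.e.\ to $c=1$; and $c=1$ is precisely the statement that the cycle arcs of $D(v,z^\star)$ span a single connected subgraph on all $n$ vertices. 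The case $z=0$ (equivalently, $A$ consistent) will be treated separately throughout, since there $D(v,0)$ is the complete digraph and the claim is immediate.

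For the implication from the combinatorial condition to uniqueness, I would start from a pair $(z,v)$ for which all directed cycles of $D(v,z)$ induce a connected spanning subgraph. First I would note that for $n\ge 2$ this forces $D(v,z)$ to contain at least one cycle, since an empty cycle set cannot yield a connected spanning subgraph on two or more vertices. Having a cycle, I would apply Corollary~\ref{all_in_cycles_one_solution}: connectedness of $D^\prime(v,z)$ gives that $v$ is the unique optimal solution. To see that $z$ is then the optimal value, I would use part~1 of Lemma~\ref{many_solutions_if_cycle}, which certifies that a feasible $v$ whose $D(v,z)$ carries a cycle is already optimal, so its objective value $z$ must be the optimum. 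The degenerate subcase $z=0$ is settled by the last line of Corollary~\ref{all_in_cycles_one_solution}.

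For the converse, I would take the unique optimal solution $v^\star$ with optimal value $z^\star$, and observe that $v^\star$ is feasible for~(\ref{proof_system1})--(\ref{proof_system4}) at $z=z^\star$. If $z^\star>0$, the contrapositive of Corollary~\ref{no_cycle_no_optimum} applies: since $v^\star$ is optimal, $D(v^\star,z^\star)$ must contain a directed cycle. I would then appeal to Lemma~\ref{many_solutions_if_cycle}, which gives that the optimal set has dimension $c-1$; uniqueness forces $c=1$, i.e.\ the cycle arcs span a single connected subgraph covering all vertices, which is exactly the asserted condition with $(z,v)=(z^\star,v^\star)$. If instead $z^\star=0$, then $A$ is consistent, $D(v^\star,0)$ is the complete digraph, and the condition holds trivially.

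I expect the main obstacle to be bookkeeping rather than a deep argument: one must make sure that the graph-theoretic phrase ``all cycles induce a connected subgraph covering all vertices'' is faithfully identified with the algebraic condition $c=1$ used in Lemma~\ref{many_solutions_if_cycle}, including the fact that isolated vertices of $D^\prime$ count as separate components and hence would raise the dimension above $0$. The second delicate point is the $z=0$ boundary: here every feasibility inequality is automatically binding (by Observation~\ref{not_both_binding}, with $L_{ij}(0)L_{ji}(0)=1$), so $D(v,0)$ is complete and $A$ is consistent, and one must check that both the uniqueness and the ``optimal value equals $z$'' conclusions survive in this case. Everything else follows mechanically from the two corollaries, which is why the result is billed as their immediate consequence.
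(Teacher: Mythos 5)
Your proposal is correct and follows exactly the route the paper intends: the paper states Theorem~\ref{cond_sol_unique} as an immediate consequence of Corollaries~\ref{no_cycle_no_optimum} and~\ref{all_in_cycles_one_solution} (with the dimension count of Lemma~\ref{many_solutions_if_cycle} supplying the link between uniqueness and $c=1$), which is precisely what you spell out, including the correct separate treatment of the consistent case $z=0$. No gaps; your write-up is simply a more explicit version of the paper's argument.
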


The question is, when the assumption from the above theorem is satisfied. Of course we are looking for some conditions that could be imposed in the real life. One could ask if the transitivity of the preference matrix will be enough, i.e if the sufficient condition could be
$$
a_{ij}>1 \wedge a_{jk}>1\Rightarrow a_{ik}>1.
$$
Unfortunately this is not the case. Even more restrictive condition:
$$
a_{ij}>1 \wedge a_{jk}>1\Rightarrow a_{ik}>\max\{a_{ij},a_{jk}\}
$$
is not sufficient, as the following example shows:
$$
A=\left[
\begin{array}{cccc}
1 & 3 & 2/7 & 11/10\\
1/3 & 1 & 1/7 & 9/10\\
7/2 & 7 & 1 & 5\\
10/11 & 10/9 & 1/5 & 1
\end{array}
\right]
$$
The set of optimal solutions is a one-dimensional segment with the endpoints $(1,0.4,3,0.625)$ and $(1,0.4,3,0.6(4))$, and the optimal value is $z=0.5$.

As we have already observed, it is not obvious that the problem (\ref{problem11})-(\ref{problem12}) has unique solution. However, even if the set of the solutions is infinite, we can still propose a nice comparison method that allows us to choose the best one.

Our main goal is to minimize the maximum over all the deviations $|a_{ij}-v_i/v_j|$. If this maximum equals to $z$, it does not matter, from the mathematical point of view, whether the other deviations are equal to $z$ or less than $z$ and what are their exact values. However, if we include the psychological aspects, then of course the solution where the deviations not equal to $z$ are as small as possible, should be preferred. The method presented in the next section satisfies this condition. Moreover, it leads us to a Pareto-optimal solution.

\section{New method of deriving the priorities}

Assume that we found an optimal solution $(v,z)$ of the problem (\ref{problem21})-(\ref{problem22}). From the previous section we know that there is at least one directed cycle of length at least $3$ in the graph $D^\prime(v,z)$. Moreover, for each non-trivial component $C_i$ of $D^\prime(v,z)$, by choosing one of its vertices (reference vertex), say $i$ and the corresponding variable $v_i$, we can express each variable $v_k$, $k\in C_i$ in the form

\begin{equation}
v_k=c^\star_kv_i,\label{formula_constant}
\end{equation}

\noindent{}where $c^\star_k$ is a constant. This is equivalent to expressing the variables $w_k$ as

\begin{equation}
w_k= c_k+w_i\label{formula_constant2}.
\end{equation}

Thus it is possible to substitute each of the variables $w_k$ by an expression with $c_k$ and finally remove a part of the inequalities in the following way. We choose one vertex from each strongly connected component of $D(v,z)$. These vertices are the vertices of a new, reduced network. Now observe that the cost of the edges inside the former components do not matter, so we can remove all the inequalities of the form $w_q-w_p\leq l_{pq}(z)$ for $p,q\in C_i$, $p\neq q$. Then, let us choose two vertices $i$ and $j$ of the reduced network. Let $p\in C_i$ and $q\in C_j$. The inequality
$$
w_q-w_p\leq l_{pq}(z)
$$
can be rewritten as
$$
w_j+c_q-w_i-c_p\leq l_{pq}(z)
$$
or
$$
w_j-w_i\leq l_{pq}(z)+c_p-c_q.
$$
The last inequality must be satisfied for all $p\in C_i$ and $q\in C_j$, so we can write it as
$$
w_j-w_i\leq \min\{l_{pq}(z)+c_p-c_q|p\in C_i, q\in C_j\}=l^\star_{ij}(z).
$$
The functions $l^\star_{ij}(z)$ are cost functions in the new network. Unfortunately, $l_{ij}^\star(z)=l_{pq}(z)+c_p-c_q$ does not mean that $l_{ji}^\star(z)=l_{qp}(z)+c_q-c_p$, which have some implications for their properties. It is straightforward to see that $l^\star_{ij}(z)$ are not necessarily differentiable and may have more than one inflection point, but still are strictly increasing. One of the consequences is also that the set of optimal solutions of the reduced network has dimension at most $n-2$, not $n-3$, since this time there can be zero-length cycles on two nodes even if $z>0$. The consequence is that the first reduction of the network reduces the number of variables (and vertices in the network) by at least $2$ and each other by at least $1$. This process ends when there are no zero-valued cycles or when the number of vertices is $2$ (in both cases there is exactly one solution of the problem). This leads us to Algorithm \ref{alg_unique}.

\begin{algorithm}
\caption{Choosing the unique optimum}\label{alg_unique}
\begin{enumerate}
\item[Step 1:]
Denote the initial problem with $P$. Let $N$ be the network corresponding with $P$. Initialize the list $LC$ (the list of the subsets of vertices of $N$ forming a partition of $V(N)$). For each vertex $i$ of $N$, add the set $C_i=\{i\}$ to $LC$. For each existing set $C_i$ (some of them will be removed later), $i$ will be the reference vertex. Solve $P$ using $N$. Let the solution be $v$ and let the resulting objective value be $z$. Go to step $2$.
\item[Step 2:]
If the subgraph $D=D(v,z)$ of $N$ is strongly connected, then STOP, $v$ is the desired solution, go to step $4$. Otherwise, go to step $3$.
\item[Step 3:]
For each strongly connected component $C$ of $D(v,z)$, choose any vertex $i\in C$. For each vertex $j\in C, j\neq i$ add all the vertices $k\in C_j$ to the set $C_i$ and remove $C_j$ from $LC$ ($j$ is the reference vertex of some set $C_j$ from the list $LC$). Compute new values of $c_k$ for $k\in C_i, k\neq i$ by applying formula (\ref{formula_constant2}). Denote new, reduced problem with $P$ and corresponding network with $N$. Solve $P$ using $N$. Let the solution be $v$ and let the resulting objective value be $z$. Go back to step $2$.
\item[Step 4:]
Having the unequivocal solution of the last instance of the problem $P$, find the solution of the initial problem by using the formula (\ref{formula_constant}).
\end{enumerate}
\end{algorithm}

Observe that in the step $3$, at the first iteration, the dimension of the problem (\ref{problem11})-(\ref{problem12}) decreases by at least $2$, and at each other iteration  by at least $1$. This means that the desired unequivocal optimum will be found after at most $n-2$ iterations (i.e. after solving at most $n-2$ instances of LWAE problem). Moreover, the following is true.

\begin{mytheorem}
The solution obtained by the Algorithm \ref{alg_unique} is Pareto-optimal.
\end{mytheorem}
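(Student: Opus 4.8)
The plan is to argue by contradiction, exploiting the leximin structure that Algorithm~\ref{alg_unique} imposes on the deviations. Denote by $v^{\mathrm{fin}}$ the solution returned by the algorithm and let $z_1>z_2>\dots>z_m$ be the successive optimal values produced at the iterations of Step~3. By construction, in $v^{\mathrm{fin}}$ every arc that becomes binding at iteration $k$ carries deviation exactly $z_k$, and $z_1=\max_{i,j}|a_{ij}-v^{\mathrm{fin}}_i/v^{\mathrm{fin}}_j|$ is the optimal value of (\ref{problem11})-(\ref{problem12}). Suppose, for contradiction, that $v^{\mathrm{fin}}$ is not Pareto-optimal, i.e.\ there is a positive vector $v'$ with $|a_{ij}-v'_i/v'_j|\le|a_{ij}-v^{\mathrm{fin}}_i/v^{\mathrm{fin}}_j|$ for all $i,j$ and strict inequality for at least one pair. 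I would show that these inequalities force $v'=v^{\mathrm{fin}}$, contradicting strictness.

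The engine of the proof is the following telescoping observation. If $v'$ has all deviations bounded by some $z>0$, then, exactly as in (\ref{new2_system1})-(\ref{new2_system4}), it satisfies $v'_i\ge L_{ij}(z)v'_j$ for all $i\neq j$, i.e.\ the system (\ref{proof_system1})-(\ref{proof_system4}). Now take any directed cycle $i_1,\dots,i_s$ of the binding graph of $v^{\mathrm{fin}}$ at level $z$; by the computation in the proof of Lemma~\ref{many_solutions_if_cycle} such a cycle satisfies $\prod_{j}L_{i_ji_{j+1\imod s}}(z)=1$. Chaining the inequalities for $v'$ around the cycle gives $v'_{i_1}\ge\big(\prod_{j}L_{i_ji_{j+1\imod s}}(z)\big)v'_{i_1}=v'_{i_1}$, so every inequality along the cycle must hold with equality. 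Hence $v'$ makes every arc of every cycle binding at level $z$ as well, its deviations on these arcs equal $z$, and its ratios inside each strongly connected component coincide with those of $v^{\mathrm{fin}}$ (they are the constants $c^\star_k$ of (\ref{formula_constant})).

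With this tool I would run an induction on the iteration index $k$. At $k=1$, weak domination together with optimality of $z_1$ give $\max_{i,j}|a_{ij}-v'_i/v'_j|=z_1$, so the telescoping observation applies at level $z_1$: $v'$ shares with $v^{\mathrm{fin}}$ all within-component ratios and thus, through the substitution (\ref{formula_constant2}), descends to a feasible solution of the reduced problem obtained in Step~3. Because the reduced problem only controls the inter-component deviations, and these are $\le z_2$ for $v^{\mathrm{fin}}$, the reduced objective attained by $v'$ is $\le z_2$; optimality of $z_2$ for the reduced problem forces equality, and the telescoping observation applies again at level $z_2$ on the reduced network. Iterating, $v'$ is pinned to $v^{\mathrm{fin}}$ on every component created at every level. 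At the final iteration the reduced graph is strongly connected, so by Corollary~\ref{all_in_cycles_one_solution} the reduced problem has a unique solution and $v'$ must equal $v^{\mathrm{fin}}$ there; reconstructing through (\ref{formula_constant}) yields $v'=v^{\mathrm{fin}}$ on all coordinates, the desired contradiction.

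The main obstacle I anticipate is the bookkeeping that makes the inductive step rigorous: one must verify that a solution respecting the already-fixed constants $c^\star_k$ genuinely corresponds to a feasible solution of the reduced network (with the within-component arcs frozen at the earlier level and no longer re-imposed), and that the reduced objective value it attains really coincides with its maximal inter-component deviation, so that optimality of $z_{k+1}$ can be invoked. Establishing this clean correspondence between the original deviations of $v'$ and the cost functions $l^\star_{ij}(z)$ of the reduced network---together with checking that the dimension-drop argument guarantees termination after finitely many levels---is where the care is needed; the telescoping equality itself is the only genuinely new idea and is comparatively short.
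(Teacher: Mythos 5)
Your proposal is correct, but it takes a genuinely different route from the paper's own proof. The paper disposes of the statement in two cases: if Algorithm \ref{alg_unique} terminates after one iteration, the solution is unique by Theorem \ref{cond_sol_unique} and hence trivially Pareto-optimal; if several iterations are needed, the paper merely cites the characterization of efficiency from Blanquero, Carrizosa and Conde (a vector $v$ solving (\ref{vector_problem}) is Pareto-optimal if and only if, for every pair $(k,l)$, it minimizes $\varepsilon_{kl}$ subject to all other deviations staying at or below their current values) and asserts that the claim follows, leaving the actual derivation to the reader. You instead argue directly and in a unified way, with no case split: any weak dominator $v'$ of the output $v^{\mathrm{fin}}$ is, after the harmless normalization $v'_1=1$ (deviations are scale-invariant), feasible at level $z_1$; the zero-slack telescoping around the binding cycles (whose $L$-product equals $1$, as computed in the proof of Lemma \ref{many_solutions_if_cycle}) then forces $v'$ to reproduce the frozen within-component ratios; hence $v'$ descends to a feasible point of the first reduced problem at level $z_2$, and so on, until the final, strongly connected level pins $v'=v^{\mathrm{fin}}$, contradicting strict domination. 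This makes explicit the leximin structure that the paper's citation leaves implicit---in effect you re-prove the relevant direction of the cited fact in this specific setting. The bookkeeping you flag as delicate does go through with the paper's machinery: feasibility of the reduced system at level $z$ is by construction equivalent to all inter-component original deviations being at most $z$ (the $\min$ defining $l^\star_{ij}$ ranges over exactly those pairs), the chaining argument uses only nonnegativity of slacks and so applies verbatim to the reduced costs (which are continuous and strictly increasing, so the analogues of Lemma \ref{many_solutions_if_cycle} and Corollary \ref{all_in_cycles_one_solution} hold for each reduced problem), and termination after at most $n-2$ reductions is established in the paper. As for what each approach buys: the paper's proof is two lines but rests on an external result and, as written, is only a sketch for the multi-step case; yours is longer but self-contained, elementary, and actually yields the slightly stronger conclusion that the only vector weakly dominating $v^{\mathrm{fin}}$ is $v^{\mathrm{fin}}$ itself up to positive scaling.
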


\begin{proof}
If the above procedure ends in one step, it follows from the Theorem \ref{cond_sol_unique}, that the obtained solution is unique and in consequence Pareto-optimal. If it takes more steps, the thesis follows from the following fact (see e.g. \cite{ref_BlaCarCon}, Proof of Theorem 1).
\begin{myfact}
Given a solution $v$ of (\ref{vector_problem}), let $\varepsilon_{ij}=|v_i/v_j-a_{ij}|$. Then $v$ is Pareto-optimal if and only if for every pair $k\neq l, k,l\in \{1,2,\dots,n\}$, $v$ is the solution of the problem:
\begin{eqnarray}
\min{\varepsilon_{kl}}\\
|v_i/v_j-a_{ij}|\leq\varepsilon_{ij}, (i,j)\neq (k,l)\\
v_i>0,i=1,\dots,n.
\end{eqnarray}
\end{myfact}
\end{proof}

\section{Computational experiments}

The algorithm has been implemented in Java and and tested for a number of randomly generated problems. The assumed accuracy level was $\varepsilon=0.000001$. The application has been tested on the PC with Intel(R) Core(TM) i7-2670 QM(2.20GHz). We tested problems with $n=10$ and $n=20$ (one rather should not expect the problems of size greater than $10$ in the real life, but this allowed us to examine the efficiency of our algorithms). We did not impose any conditions on the inconsistency of the initial matrix, so the elements of $A$ were chosen uniformly at random from the set $\{1,2,\dots,a_{max}\}$, where $a_{max}\in\{3,5,10\}$, and then the chosen elements and its reciprocal were put on two sides of the diagonal in the random order. In every case $100$ problems have been solved (that gives the total number of $600$ test problems). The information on the running times of Algorithm \ref{alg2} (Time1, in seconds), running times of Algorithm \ref{alg3} (Time2, in seconds) and the number of LWAE subproblems solved (\#LW) are given in Table \ref{table1} (AVG - average time in seconds, DEV - standard deviation, MIN - minimum time in the sample, MAX - maximum time in the sample).

\begin{table}[ht]
\caption{Results of tests}\label{table1}
\begin{center}
\begin{tabular}{|c|c|r|r|r|r|r|r|}
\hline
\multirow{2}{*}{$a_{max}$}	&\multirow{2}{*}{Parameter}	&\multicolumn{3}{|c|}{$n=10$}	&\multicolumn{3}{|c|}{$n=20$}	\\
\cline{3-8}
								&&Time1	&Time2	&\#LW	&Time1	&Time2	&\#LW	\\
\hline
\multirow{4}{*}{$3$}	&AVG	&0.0041	&0.0024	&3.41	&0.0064	&0.0021	&1.86	\\
						&DEV	&0.0019	&0.0012	&1.07	&0.0013	&0.0010	&0.73	\\
						&MIN	&0.0016	&0.0005	&1		&0.0047	&0.0005	&1		\\
						&MAX	&0.0125	&0.0078	&6		&0.0115	&0.0055	&5		\\
\hline
\multirow{4}{*}{$5$}	&AVG	&0.0038	&0.0021	&4.43	&0.0110	&0.0054	&4.49	\\
						&DEV	&0.0015	&0.0007	&0.96	&0.0037	&0.0024	&1.87	\\
						&MIN	&0.0016	&0.0011	&2		&0.0059	&0.0014	&1		\\
						&MAX	&0.0095	&0.0045	&7		&0.0228	&0.0117	&10		\\
\hline	
\multirow{4}{*}{$10$}	&AVG	&0.0063	&0.0044	&4.80	&0.0320	&0.0169	&8.67	\\
						&DEV	&0.0022	&0.0014	&1.05	&0.0108	&0.0048	&1.61	\\
						&MIN	&0.0022	&0.0011	&2		&0.0125	&0.0062	&5		\\
						&MAX	&0.0172	&0.0086	&7		&0.0578	&0.0360	&13		\\
					
\hline
\end{tabular}
\end{center}
\end{table}

As we can see, in all the cases the running times are much less than one second, which is acceptable in real life applications. The times in general increase with $a_{max}$ and $n$, although there are some exceptions. The most interesting one is probably the fact that Algorithm \ref{alg_unique} with Algorithm \ref{alg3} used for the LWAE subproblems ran faster for bigger problems where $a_{max}=3$. The explanation of this phenomenon is probably the fact that in the presence of so little possible values of the entries of $A$, the bigger matrix became more consistent (the objective function became more "flat"), what resulted in smaller number of necessary runs of Algorithm \ref{alg3}.

One can also observe that it is more efficient to use Algorithm \ref{alg3} (successive canceling of negative cycles) to solve the subproblems - in all cases this algorithm behaves better than Algorithm \ref{alg2} (bisection).

\section{Conclusion}

In the paper we provide a new method of deriving priorities from an inconsistent Pairwise Comparison Matrix. Our method produces a Pareto-optimal solution very quickly because of using the logarithmic transformation and network algorithms. The running times are so small that the new method is competitive against other methods of finding the Pareto-efficient solutions of (\ref{vector_problem}).

An interesting open problem is whether the graph theoretic approach applied successfully in this paper can be  preserved for the case of $G_1(A,v)$. We leave it as an open question for future research.

\section*{Acknowledgments}
The research of the first author was part of the project "Nonlinear optimization in chosen economical applications". The project was financed by the National Science Center grant awarded on the basis of the decision number DEC-2011/01/D/HS4/03543. The research of the second author was supported in part by OTKA grant 111797.

\end{document}